\definecolor{darkblue}{rgb}{0,0,102}
\date{\today}
\newtheorem{theorem}{Theorem}[section]
\newtheorem{proposition}[theorem]{Proposition}
\theoremstyle{definition}
\newtheorem{defn}[theorem]{Definition}
\newtheorem{lemma}[theorem]{Lemma}
\newtheorem{cor}[theorem]{Corollary}
\theoremstyle{remark}
\newtheorem{remark}[theorem]{Remark}
\numberwithin{equation}{section}
\newcommand\R{\mathbb{R}}
\newcommand\C{\mathbb{C}}
\newcommand{\RN}{\mathbb{R}^N}
\renewcommand{\Re}{\operatorname{Re}}
\begin{document}

\title[Carleman inequalities for fractional operators]{Carleman type inequalities  for fractional relativistic operators}

\author[L. Roncal, D. Stan and L. Vega]{Luz Roncal, Diana Stan and Luis Vega}

\address[L. Roncal]{BCAM - Basque Center for Applied Mathematics \\
48009 Bilbao, Spain\\
and  Ikerbasque, Basque Foundation for Science, 48011 Bilbao, Spain\\
 and Department of Mathematics\\
 UPV/EHU\\
Apto. 644, 48080 Bilbao, Spain}
\email{lroncal@bcamath.org}

\address[D. Stan]{Department of Mathematics, Statistics and Computation\\
University of Cantabria\\
39005 Santander, Spain}
\email{diana.stan@unican.es}

\address[L. Vega]{Department of Mathematics\\
 UPV/EHU\\
Apto. 644, 48080 Bilbao, Spain\\
and BCAM - Basque Center for Applied Mathematics \\
48009 Bilbao, Spain}
\email{luis.vega@ehu.es}

\keywords{Fractional relativistic operators, Carleman estimates, monotonicity estimates.}
\subjclass[2010]{Primary: 35R11 Secondary: 35B40, 35K05}


\begin{abstract}
In this paper we  derive Carleman estimates for the fractional relativistic operator.
We consider changing-sign solutions to the  heat equation for such operators. We prove monotonicity inequalities and convexity of certain energy functionals to deduce Carleman estimates with linear exponential weight. Our approach is based on spectral methods and functional calculus.
\end{abstract}
\maketitle


\section{Introduction and main result}

In this paper we are interested in exponential decay estimates for  strong solutions of the evolution equation 
\begin{equation}
\label{eq:proRel}
\begin{cases}
u_t(t,x)+(-\Delta + m^2)^s u(t,x)=V(t,x) u(t,x), &\quad x\in \RN, \,\,\, t>0, \\
u(0,x)=u_0(x), &\quad x\in \RN,
\end{cases}
\end{equation}
where $s\in (0,\frac{1}{2})$   and $m> 0$.  Here, the solution will be taken as $u:[0,T] \times \RN \to \R$  and the potential $V: [0,T]\times \RN \to \mathbb{R}.$

We will name the operator $(-\Delta + m^2)^s $ the \textit{fractional relativistic Schr\"odinger operator with mass $m$} (or just fractional relativistic operator). For $s=1/2$, $(-\Delta + m^2)^{1/2} $ is
sometimes called the square root Klein--Gordon operator, see for instance \cite{L} and to be more precise, the terminology \textit{relativistic Schr\"odinger operator} concerns $
\sqrt{-\Delta + m^2}-m+V$.
The latter is motivated by the kinetic energy of a relativistic particle (that is, a particle travelling with speed close to the speed of light $c$) with mass $m$, and $V$
corresponds to the quantization of the potential energy. We refer the reader to \cite{CMS}, where in particular the motivation and justification for the nomenclature of this
operator is explained in the Introduction. The relativistic Schr\"odinger operator has been extensively studied (\cite[Section 7.11]{LL}, \cite{FLS}) as well as the evolution
Problem \eqref{eq:proRel} which involves such an operator, see e.g. \cite{BDQ}.  We will not give an exhaustive account of the references.
Related equations to \eqref{eq:proRel} have been also considered, namely, the boson star equation was studied in \cite{FJL}.

\subsection{Motivation and main results}One of our main motivations is the search of lower bounds  for solutions of \eqref{eq:proRel} for large $|x|$ very much in the spirit of what is known for solutions of the heat equation. More concretely, consider
the heat equation with a potential
\begin{equation}
\label{eq:H}
\begin{cases}
u_t(t,x)-\Delta u(t,x)=V(t,x) u(t,x), &\quad x\in \RN, \,\,\, t>0, \\
u(0,x)=u_0(x), &\quad x\in \RN.
\end{cases}
\end{equation}
It was proved in  \cite{EKPV-JEMS,EKPV-CMP} that if the potential $V$ is bounded and $\|e^{\alpha (0) |\cdot|^2}u_0 \|_{L^2(\RN)}+\| e^{\alpha(T)|\cdot|^2}u(T,\cdot)\|_{L^2(\RN)}<~\infty$ with $\alpha (0)=0$ and $\alpha(T) =1/ (4T)$,  then $u\equiv 0$. This is known as a unique continuation result. The proof is obtained by contradiction after getting first lower bounds that hold for all the solutions of \eqref{eq:H} and which are obtained after a three-step procedure:
\begin{enumerate}
\item First, it is necessary to establish a monotonicity argument that gives  the persistence of the Gaussian decay for positive times if the same is assumed at the initial time. In this first step it is proved that $\alpha (t)$ can decrease with time, as it happens for example with the fundamental solution of \eqref{eq:H}.
\item The second step involves convexity arguments. It is proved that if the solution has the same decay at two different times $t_1<t_2$ (i.e. $\alpha(t_1)=  \alpha(t_2)$), then for $t_1<t<t_2$  the solution has a better gaussian decay, i.e.  $\alpha(t)> \alpha(t_1)$ for $t_1<t<t_2$.
\item Finally, the last ingredient is to obtain Carleman estimates that together with a localization procedure allow to prove the desired lower bounds.
\end{enumerate}

This procedure has turned out to be rather general. On the one hand, it was proved in \cite{EKPV07} that they work for local evolution equations of higher degree like the generalized Korteweg-De Vries equation, which is a third order equation in the spatial variable. In that example the right decay is superlineal and it is given by $e^{-\rho|x|^{3/2}}$ for some $\rho>0$. On the other hand, the same procedure was also proved to be successful in the discrete case. Indeed, when one considers the discrete Laplacian \cite{F-BV}, the decay rate is slightly superlineal and is given by $e^{-\rho|x|\log (1+|x|)}$. 

In this article we explore up to what extent the  three ingredients mentioned above hold for solutions of~\eqref{eq:proRel}. Let us describe the structure of the paper.
In Section \ref{Sect:Defn} we present several definitions of the operator, the associated fractional Leibniz rule and positivity inequalities. Moreover, we construct a family of explicit eigenfunctions. In Section \ref{Sect:Heat} we consider the initial value problem  for the associated homogeneous heat equation 
\begin{equation}
\label{eq:ReHeat}
\begin{cases}
u_t(t,x)+(-\Delta + m^2)^s u(t,x)=0, &\quad x\in \RN, \,\,\, t>0, \\
u(0,x)=u_0(x), &\quad x\in \RN,
\end{cases}
\end{equation}
with $s\in (0,1)$ and   $m> 0$.
  A remarkable property (consequence of $m>0$) that we show is that the following weighted energy is finite and log-convex
  \begin{equation}\label{energy:lambda}
  \mathcal{H}(t):=  \int_{\RN} e^{\lambda \cdot x} u^2(t,x) dx < \infty  \quad \text{for all }t>0  \quad \text{ and } \quad |\lambda|\le 2m,\end{equation}
  provided the same spatial decay is assumed for the initial data.

Then,  Section \ref{Sect:HeatPotential} is devoted to the study of strong solutions to Problem \eqref{eq:proRel}.   These are solutions $u\in C([0,T]:\mathbb{H}^{2s}(\RN))$ such that $u_t \in L^1([0,T]: \mathbb{H}^{2s}(\RN))$ and their existence is justified for data $u_0 \in   \mathbb{H}^{2s}(\RN)$ and $V\in L^\infty([0,T]\times \RN)$.  Here, $\mathbb{H}^{2s}(\RN)$ is the Sobolev space introduced in Section \ref{Sect:Defn}.
We prove that the quantity \eqref{energy:lambda} is still finite in the case of Problem \eqref{eq:proRel} with  a bounded potential and we also show a result concerning backward unique continuation for the heat equation with potential.  

Finally, in Section \ref{Sect:Carleman1}, we show Carleman-type estimates with linear exponential weight for solutions to Problem~\eqref{eq:proRel}. This is stated in the theorem below, where the operator $H_m^{s}$ is defined  in Proposition \ref{prop:leibniz}   
as
$$H^s_m(u,u)(t,x):=-C_{N,s} m^{\frac{N+2s}{2}}\int_{\mathbb{R}^N} \frac{\big(u(t,x)-u(t,y)\big)^2} {|x-y|^{\frac{N+2s}{2}}} K_{\frac{N+2s}{2}}(m|x-y|)\,dy -m^{2s}u^2(t,x).
$$ 
Here, $K_{\nu}(z)$ denotes the Macdonald's function of order $\nu$, see Section \ref{Sect:Defn}. 

  \begin{theorem}
  \label{th11}
Let $N\ge 1$, $s\in (0,1/2)$, $m>0$,  and let $u_0 \in   \mathbb{H}^{2s}(\RN) \cap   L^2(e^{\lambda\cdot x}\, dx)$ for some $\lambda \in \RN$ with $|\lambda|\le m$. Assume that $F(t,x):=V(t,x)u(t,x) \in L^2(0,T:L^2(e^{\lambda\cdot x}\, dx))$. Let $u$ be a  strong   solution to the initial value problem \eqref{eq:proRel} and let  $\omega(t,x)=e^{At}e^{\lambda\cdot x}$.
Then, the following inequality holds
  \begin{align*}
&\big((-|\lambda|^2+m^2)^s-A\big)^2  \int_0^1 t(1-t) \int_{\RN} \omega(t,x) u^2(t,x) dx\, dt +\frac{1}{2}\int_0^1\int_{\RN} \omega(t,x) u^2(t,x) dx\,dt \\
&\quad+ \frac{1}{2}\int_0^1 t(1-t)   \Big\{2  \int_{\RN} \omega ( u_t)^2dx  - \int_{\RN} \omega H^{2s}_m(u,u) dx  + (A+m^{2s})\int_{\RN} H^s_m(u,u) \omega \, dx\Big\} dt  \\
&\quad\le \int_{\RN} e^{\lambda \cdot x} (u_0^2(x) + u^2(1,x)) dx
 +  C_1(N,s)  \int_0^1 \int_{\RN} \omega(t,x)\big( (\partial_t+(-\Delta+m^2)^s)(u(t,x)) \big)^2\,dx\,dt,
\end{align*}
for  $A+m^{2s}<0$ sufficiently small (that is, $|A|$ sufficiently large) satisfying
$$
\big((-|\lambda|^2+m^2)^s-A\big)^2 \ge C_2(N,s) m^{4s},
$$
where $C_1(N,s), C_2(N,s)$ are positive constants depending only on $N$ and $s$.
\end{theorem}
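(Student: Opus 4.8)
\medskip

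\noindent\emph{Proof plan.} I would follow the log-convexity/energy method of Escauriaza--Kenig--Ponce--Vega \cite{EKPV-JEMS,EKPV-CMP}, adapting the commutator computations to the non-local setting via the fractional Leibniz rule of Proposition~\ref{prop:leibniz} and the explicit eigenfunction identities
\[
(-\Delta+m^2)^s e^{\lambda\cdot x}=(m^2-|\lambda|^2)^s\,e^{\lambda\cdot x},\qquad (-\Delta+m^2)^{2s}e^{\lambda\cdot x}=(m^2-|\lambda|^2)^{2s}\,e^{\lambda\cdot x},
\]
constructed in Section~\ref{Sect:Defn} and available precisely because $|\lambda|<m$, so that the eigenvalue $m^2-|\lambda|^2$ is positive. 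Abbreviate $L:=(-\Delta+m^2)^s$, keep $g:=(\partial_t+L)u$ (so that $g=Vu$) as an external source, set $\tau:=(m^2-|\lambda|^2)^s-A>0$ (positive since $A<0$ with $|A|$ large), and put $H(t):=\int_{\RN}\omega(t,x)u^2(t,x)\,dx$. By the a priori estimates of Sections~\ref{Sect:Heat}--\ref{Sect:HeatPotential} (bounded potential, $|\lambda|<m$, $u\in L^2(e^{\lambda\cdot x}dx)$, $u^2\in\operatorname{Dom}(L^2)$), $H$ is finite on $[0,1]$ and all differentiations under the integral sign and integrations by parts below are legitimate. The endgame is to establish a second-order differential inequality for $H$, multiply it by $\theta(t):=t(1-t)\ge0$ and integrate by parts twice on $[0,1]$: since $\theta(0)=\theta(1)=0$ and $\theta''\equiv-2$, one has $\int_0^1\theta\,\ddot H\,dt=H(0)+H(1)-2\int_0^1H\,dt$, producing the boundary term $\int_{\RN}e^{\lambda\cdot x}(u_0^2+u^2(1,\cdot))\,dx$ (using $e^{A}\le1$) and a positive multiple of $\int_0^1\int_{\RN}\omega u^2$ on the left.

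\emph{First derivative.} Using $\omega_t=A\omega$, $u_t=g-Lu$, the Leibniz identity $L(u^2)=2uLu-m^{2s}u^2-H^s_m(u,u)$ from Proposition~\ref{prop:leibniz}, self-adjointness of $L$, and $L\omega=(m^2-|\lambda|^2)^s\omega$, one obtains the clean identity
\[
\dot H(t)=-(\tau+m^{2s})H(t)-\int_{\RN}\omega\,H^s_m(u,u)\,dx+2\int_{\RN}\omega\,u\,g\,dx .
\]
\emph{Second derivative.} Differentiating $\dot H=AH+2\int\omega uu_t$ once more, the term $2\int\omega u_t^2$ survives explicitly; substituting $u_{tt}=g_t-Lu_t$, commuting $L$ past the weight by Proposition~\ref{prop:leibniz} and the eigenfunction identities, and rewriting $\int\omega(Lu)^2$ through $\int\omega u\,L^2u$ and then, via the $2s$-Leibniz identity $L^2(u^2)=2uL^2u-m^{4s}u^2-H^{2s}_m(u,u)$ and $L^2\omega=(m^2-|\lambda|^2)^{2s}\omega$, through $\int\omega\,H^{2s}_m(u,u)$, one is led to an identity for $\ddot H$ whose dominant term is $\big((m^2-|\lambda|^2)^s-A\big)^2H$, accompanied by (half of) the brace in the statement, by cross terms in $g$ absorbable by Cauchy--Schwarz into $C_1\int\omega g^2$ plus a fraction of the main terms, and by a finite family of remainder terms arising from the failure of $L$ to satisfy an exact Leibniz rule. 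These remainders feature the non-local commutator $[L,e^{\lambda\cdot x}]$; conjugated back by $e^{-\lambda\cdot x}$, Proposition~\ref{prop:leibniz} shows this is a lower-order operator which is \emph{bounded} on $L^2(\RN)$ precisely because $s\le1/2$ (this is where the restriction on $s$ enters), with size governed by $m$ and by $|\lambda|<m$; together with the positivity inequalities of Section~\ref{Sect:Defn} each remainder is dominated by $C\,m^{4s}H$ up to absorbable pieces. Imposing $\tau^2\ge C_2(N,s)m^{4s}$ and $A+m^{2s}<0$ with $|A|$ large, all remainders and all $g$-cross terms are absorbed, and we obtain
\begin{align*}
\ddot H(t)\ \ge\ &\big((m^2-|\lambda|^2)^s-A\big)^2 H(t)-C_1\int_{\RN}\omega\,g^2\,dx\\
&+\tfrac12\Big(2\int_{\RN}\omega u_t^2\,dx-\int_{\RN}\omega H^{2s}_m(u,u)\,dx+(A+m^{2s})\int_{\RN}H^s_m(u,u)\,\omega\,dx\Big).
\end{align*}
Multiplying by $\theta(t)=t(1-t)$, integrating over $[0,1]$, using $\int_0^1\theta\,\ddot H=H(0)+H(1)-2\int_0^1H$, and recalling $g=(\partial_t+(-\Delta+m^2)^s)u=Vu$, one arrives at the asserted inequality (the numerical constant $\tfrac12$ in front of $\int_0^1\int_{\RN}\omega u^2$ being tracked through this last step).

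\emph{Main obstacle.} The crux is the second-derivative step: one must keep track of the several remainder terms generated by the two non-exact fractional Leibniz rules and verify that each is controlled, after using $|\lambda|<m$, by $m^{4s}H$, so that the single smallness condition $\tau^2\ge C_2(N,s)m^{4s}$ absorbs them all, while simultaneously organizing the surviving contributions into the brace with exactly the stated coefficients. This is also where the hypotheses $m>0$ and $s\le1/2$ are essential: for $m=0$ the function $e^{\lambda\cdot x}$ is no longer an eigenfunction of $(-\Delta)^s$, so $\tau$ and the clean first-derivative identity disappear; and for $s>1/2$ the conjugated commutator ceases to be bounded on $L^2$ and the remainders can no longer be absorbed by this argument.
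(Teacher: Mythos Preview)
Your overall strategy matches the paper's: compute a second-order differential inequality for $H(t)$ (the paper introduces $D(t)$ with $\dot H=D+2\int\omega uF$ and then estimates $\dot D$, which is the same thing), and integrate against $t(1-t)$ --- the paper's tent-function device is an equivalent packaging of your identity $\int_0^1\theta\,\ddot H = H(0)+H(1)-2\int_0^1 H$.

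Two points, however, are not quite right. First, the cross term that must be controlled is concretely $-2\int H_m^s(\omega,u)\,F\,dx$, and the paper does not treat it via an abstract ``bounded conjugated commutator'' argument. Instead one writes out the double integral from Proposition~\ref{prop:leibniz}, splits into $\{|x-y|<1/m\}$ and $\{|x-y|>1/m\}$, uses the small- and large-argument asymptotics of the Macdonald function $K_{(N+2s)/2}$ together with $|\lambda|<m$, and after several applications of Cauchy--Schwarz and AM--GM obtains
\[
\Big|{-2}\int H_m^s(\omega,u)\,F\Big|\le -m^{2s}\!\int\omega\, H_m^s(u,u)+C_1(N,s)\!\int\omega\, F^2+C_2(N,s)\,m^{4s}\!\int\omega\, u^2.
\]
This hands-on estimate is the technical heart of the proof; it does not itself require $s\le 1/2$. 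Second, your explanation for the restriction $s\le 1/2$ is not the paper's. The actual reason is simpler: the brace in the final inequality contains the term $-\int\omega\, H_m^{2s}(u,u)$, and one needs it to be nonnegative. The only way available to guarantee $H_m^{2s}(u,u)\le 0$ is the pointwise integral formula of Proposition~\ref{prop:leibniz}, which is valid for the exponent $2s$ only when $2s\in(0,1]$.

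A minor slip: in your first-derivative identity you write $L(u^2)=2uLu-m^{2s}u^2-H_m^s(u,u)$, but with the paper's convention the $-m^{2s}fg$ term is already part of $H_m^s(f,g)$, so the correct identity is $L(u^2)=2uLu+H_m^s(u,u)$; this flips the signs in your formula for $\dot H$.
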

The estimate in Theorem \ref{th11} is a Carleman-type inequality as mentioned in (3) above, which is achieved by: (1) proving monotonicity estimates for the corresponding energy functionals for the solution $u: [0,T]\times  \RN  \to \R$, see Proposition \ref{prop:monot}, and (2) convexity arguments, see Proposition \ref{prop:lower}.  Only linear exponential weights are admissible, and only for $m>0$,  a fact that  is strongly related to the decay of the kernel associated to the fractional relativistic operator, and in big contrast with the polynomial decay of the kernel of $(-\Delta)^s$. 

Our techniques are based on functional and spectral calculus and they hold for exponents $0<s<1/2$. Observe that this restriction in $s$ is due to the fact that we need the operator $H_m^{2s}$ to be negative to keep the corresponding energy term positive, and we are able to ensure this only by using the definition given in Proposition \ref{prop:leibniz}, which is valid for $0<s<1/2$. We notice a major difference with respect to the classical diffusion process: the coefficient $\alpha(t)$ in the weight $e^{\lambda \cdot x}$ is always constant, thus the spatial decay does not change with time.  Log-convexity still works, thus the decays at time $0$ an $1$ control the decay at intermediate times. We also notice the important restriction $|\lambda|\le m$. As it is shown in Proposition \ref{prop:analy}, the weight $e^{\lambda \cdot x}$ is an eigenfunction of the operator $L_m^s$ under the restriction of $|\lambda|<m$ (the case $|\lambda|=m$ is trivial). There is a deeper obstruction behind, related with the analyticity of the Fourier multiplier in the case $|\lambda|>m$, see Remark \ref{rem:analyti}.

Unfortunately, the Carleman estimates stated in Theorem \ref{th11} are not sufficient to conclude any lower bounds. The problem comes from the non-local properties of the operator $(-\Delta+m^2)^s$. As we said in (3) above in order to use the Carleman estimates, a localization procedure is necessary. When this is done in the non-local setting, the only way of closing the argument is to assume that the fundamental solution in the constant coefficient case \eqref{eq:ReHeat} decays at least as a superlineal exponential, which is not the case. For example
the fundamental solution (see Section \ref{Fundamental:Solution}) for $s=1/2$ is known to have an explicit form  (see \cite{Ba,CMS}, also \cite[p. 185--186]{LL}):
$$
K_t^{1/2}(x)=(2\pi)^{N/2}\Big(\frac{2}{\pi}\Big)^{1/2}m^{\frac{N+1}{2}}t(|x|^2+t^2)^{-\frac{N+1}{4}}K_{\frac{N+1}{2}}(m\sqrt{|x|^2+t^2}),
$$
and $K_t^{1/2}(x) \sim e^{-m|x|}$ for large $|x|$, see \eqref{eq:asympInf}.

This rises the question of some other possible scenarios of non-local operators that exhibit super-lineal exponential decay.
There  are examples of distribution densities of L\'evy processes which show a ``weakly super-linear'' asymptotic behaviour. Let us explain this more precisely: let $(Z_t)_{t\ge0}$ be a real--valued L\'evy process with characteristic exponent $\psi$, i.e., $Ee^{iz Z_t}=e^{t\psi(z)}$, $t>0$. The function $\psi:\R\to \C$ admits the L\'evy--Khinchin representation
$$
\psi(z)=iaz-bz^2+\int_{\R}(e^{iuz}-1-izu 1_{\{|u|\le1\}})\,\mu(du), \quad a\in \R, \,\,\, b\ge 0
$$
and $\mu(\cdot)$ is a \textit{L\'evy measure}, that is, $\int_{\R}(1\wedge u^2)\,\mu(du)<\infty$ (the operator $(-\Delta+m^2)^s$ falls into this class). The function $e^{t\psi}$ is integrable under some conditions of the process $Z_t$ and hence the associated transition probability density $p_t(x)$ has the integral representation as the inverse Fourier transform of the characteristic function, $p_t(x)=\frac{1}{2\pi}\int_{\R}e^{-izx+t\psi(z)}\,dz$. It is possible to investigate this oscillatory integral, under the assumption that the characteristic exponent $\psi$ admits an \textit{analytic extension} to the complex plane. A usual assumption on the L\'evy measure is that it is exponentially integrable
$$
\int_{|y|\ge 1}e^{Cy}\mu(dy)<\infty, \quad \text{for all } C\in \R.
$$
 The latter assumption is satisfied, for instance, for a \textit{generalized tempered} L\'evy measure defined in terms of certain $\psi$ with super-exponential decay, i.e., $e^{Cu}\psi(u)\to 0$, $u\to \infty$, for all $C\in \R$, see for example \cite{KK,Sz}. Then we may find L\'evy processes where the transition probability density satisfies a ``weakly super-linear'' asymptotic behavior, namely, for constants $c_2<c_1$, there exists $y=y(c_1,c_2)$ such that
 $$
 \exp\big(-c_1 x \log^{\frac{\beta-1}{\beta}}\big(\frac{x}{t}\big)\big)\le p_t(x) \le  \exp\big(-c_2 x \log^{\frac{\beta-1}{\beta}}\big(\frac{x}{t}\big)\big), \quad x/t>y,
 $$
 where $\beta>1$, if the L\'evy measure $\mu$ satisfies certain exponential estimates (see \cite[(1.18)]{KK}).  It seems rather natural to look for non-local operators whose fundamental solutions have superlineal decay, alike the discrete case above mentioned, and that still have the analyticity properties mentioned above. We will explore this question in the future.

We finish this subsection with some further references on unique continuation. For elliptic nonlocal models, in \cite{R,RW,Spams,S} the authors use Carleman estimates. Other techniques are used: in \cite{BG,FF1,FF2} the so-called Almgren monotonicity formulas (see \cite{GL}) are used, and lower bounds and Runge approximation results for the fractional heat equation are proved in \cite{RS}.

\subsection{Importance of analyticity: revisiting the fractional Landis conjecture}

We make some remarks concerning the case $m=0$.  Let $N\ge 1$ and $s\in (0,1)$. Let $u: \mathbb{R}^N  \to \mathbb{R}$ be a solution to the equation
\begin{equation}
\label{landis}
(-\Delta)^su(x)=V(x) u(x), \quad x \in \mathbb{R}^N,
\end{equation}
with $V \in L^\infty(\mathbb{R}^N)$. In a recent work, R\"uland and Wang \cite{RW} proved that, for potentials with some a priori bounds, if a solution to Problem \eqref{landis} below decays at a rate $e^{-|x|^ {1+}}$, then this solution is trivial. On the other hand, for $s\in(1/4,1)$ and merely bounded non-differentiable potentials, if a solution decays at a rate $e^{-|x|^ {\alpha}}$ with $\alpha>4s/(4s-1)$, then this solution must again be trivial. We remark that when $s\to1$, $4s/(4s-1)\to4/3$, which is the optimal exponent for the standard Laplacian, see \cite{M}. In fact, in \cite[Theorem 3]{RW} they provide  a quantitative lower bound which leads to the mentioned unique continuation principle. Their result
motivates us to point out how another result on unique continuation, of qualitative nature, can be obtained as a consequence of lack of analyticity. 
 Assume that $u$, the solution to Problem \ref{landis}, decays exponentially fast at infinity, i.e.
\begin{equation}
\label{ux}
 |u(x)|\le e^{-c|x|^{1^+}}.
\end{equation}
Then it follows that $u\equiv 0$.  Indeed (for simplicity, we restrict ourselves to the one dimensional case), if we take the Fourier transform at both sides of equation \eqref{landis}
$$
|\xi|^{2s}\widehat{u}(\xi)=\widehat{F}(\xi), \qquad F:=Vu,
$$
we notice that the right hand side is analytic in $\C$ while the left hand side is not. This is justified as follows: observe that condition \eqref{ux} implies that $\widehat{u}(\xi)$ is analytic (the exponential decay of $u$ makes the Fourier transform of $u$ to be well defined, as well as its derivatives). Moreover, since $V$ is uniformly bounded then the right hand side term $F$ also decays exponentially fast at infinity and thus $\widehat{F}$ is analytic. Since $|\xi|^{2s}$ is not analytic at the origin we conclude that $\widehat u$ has to be identically zero.

We note that condition \eqref{ux} can be relaxed to an $L^2$ decay condition such as $\|u(\cdot) e^{c|\cdot|^{1^{+}}}\|_{L^2(\R^N)}<\infty$, which is sufficient to ensure the analyticity of $\widehat{u}$.     Observe also that the result can be seen as an optimal, qualitative Landis-type conjecture, valid for all $s\in (0,\infty)\setminus \mathbb{N}$. 
Finally we remark that only for $s=\frac{1}{2}$, Landis-type conjecture for \eqref{landis} is proved under the hypothesis of merely exponential decay, see \cite{KW}.

\begin{remark} Some comments are in order:
\begin{enumerate}
\item[(i)] In the case of  the parabolic problem $u_t(t,x) + (-\Delta+m^2)^s u(t,x) =V(t,x) u(t,x)$ ($m\ge 0$) the argument above does not work anymore since the exponential decay of $u$ assumed in \eqref{ux} does not need to be inherited by $u_t$.
\item[(ii)] The exponential decay is a sufficient condition that can not be improved with this technique. Less decay assumptions on $u$ were considered for instance by Frank, Lenzmann and Silvestre \cite{FLSi}, although their result concerns only radial solutions.
\end{enumerate}
\end{remark}

\subsection{Remarks and notations} We want to emphasize that what corresponds in fact to a diffusion problem is the following equation:
\begin{equation}
\label{eq:proRe2}
v_t(t,x)+((-\Delta + m^2)^s - m^{2s})v(t,x)=V(t,x)\, v(t,x), \quad x\in \RN, \,\,\, t>0,
\end{equation}
with data $v(0,x)=u_0(x)$, $x\in \RN$.  It is easy to check that mass is conserved  when $V=0$, i.e. $\int_{\RN} v(t,x)dx = \int_{\RN} u_0(x) dx$ for all $t>0$. Moreover,
\begin{equation}\label{transformation}
  u(t,x):= e^{-m^{2s}t} v(t,x)
\end{equation}is a solution to Problem \eqref{eq:proRel} with the same initial data. Throughout the paper we will work with Problem~\eqref{eq:proRel} . Most of the results can be reformulated in terms of the solution to Problem \eqref{eq:proRe2} via the transformation \eqref{transformation} or by simply adapting the definition of the operator adding the term $m^{2s}I$. For instance, the spatial behaviour for small times is the same for both $u$ and $v$.

We denote, for $m\ge 0$, the operator $$L_m:=(-\Delta + m^2 )
$$
(observe that $
L_0=(-\Delta)$). The main reason to work with $L^s_m:=(-\Delta + m^2)^s$ and not $\mathcal{R}^s_m := (-\Delta + m^2)^s - m^{2s}$ is that the composition law becomes simpler in the case of $L^s_m$, namely
$$
 L^s_m (L^s_m)=L^{2s}_m \qquad \text{ unlike }\qquad \mathcal{R}^s_m (\mathcal{R}^s_m)=\mathcal{R}^{2s}_m - 2m^{2s}\mathcal{R}^s_m.
$$

Along the paper we will use a fairly standard notation. We will just skip the variables $(t,x)$ of the functions in many of the instances e.g., we will sometimes use $u$ instead of $u(t,x)$. The complete expression will be used when relevant.

\section{The fractional relativistic operator. Definitions and properties}\label{Sect:Defn}

There are various equivalent definitions for $ L_m^s$, we state two of them below. The proof of the equivalence is given in Appendix \ref{equiv}. We will always consider real valued functions to avoid complex conjugates. We refer the reader to \cite{SKM, StSingular} for more information about
fractional powers of relativistic operators and Bessel potentials.  

\noindent (1) \textbf{Definition using the Fourier transform.}
For a function $f$ in the Schwartz class $\mathcal{S}$ we can define its Fourier transform as
$$
\widehat{f}(\xi)=\frac{1}{(2\pi)^{N/2}}\int_{\RN}f(x)e^{-ix\cdot \xi}\,dx,\quad \xi\in \RN.
$$
The inversion formula is given, for $f\in \mathcal{S}$, by
$$
\mathcal{F}^{-1}(f)(x)=\frac{1}{(2\pi)^{N/2}}\int_{\RN}f(\xi)e^{-i\xi\cdot x}\,d\xi,\quad x\in \RN.
$$
Let $0<s<1$,  $m\ge0$ and $f\in \mathcal{S}$. The operator $ L_m^s (f)$ is defined as a pseudo-differential operator
\begin{equation}\label{Defn:Fourier}
\widehat{L_m^s f  }(\xi) = (|\xi|^2 +m^ 2)^ s \widehat{f}(\xi)=(\xi\cdot \xi +m^ 2)^ s \widehat{f}(\xi),\quad \xi \in \RN.
\end{equation}

\noindent (2) \textbf{Definition via a singular integral.}
We first introduce some well known facts about the so called modified Bessel functions and Macdonald's functions, that will be useful later.
Let
$I_{\nu}(z)$ be the modified Bessel function of first kind given by the formula (see \cite[Chapter 5, Section 5.7]{Lebedev})
\begin{equation}
\label{eq:Inu}
I_{\nu}(z)=\sum_{k=0}^{\infty}\frac{(z/2)^{\nu+2k}}{\Gamma(k+1)\Gamma(k+\nu+1)}, \qquad |z|<\infty, \quad |\operatorname{arg} z|<\pi
\end{equation}
and let $ K_{\nu} $ be the Macdonald's function of order $\nu$ defined by (see also \cite[Chapter 5, Section 5.7]{Lebedev})
\begin{equation}
\label{eq:Knu}
K_{\nu}(z)=\frac{\pi}{2}\frac{I_{-\nu}(z)-I_{\nu}(z)}{\sin \nu\pi}, \qquad |\operatorname{arg} z|<\pi, \quad  \nu\neq 0,\pm1, \pm 2, \ldots
\end{equation}
and, for integral $\nu=n$, $K_n(z)=\lim_{\nu\to n}K_{\nu}(z)$, $n= 0,\pm1, \pm 2, \ldots$ From \eqref{eq:Inu} and \eqref{eq:Knu} it is clear that\begin{equation}
\label{eq:asymp0}
K_{\nu}(z)\sim\frac{\Gamma(\nu)}{2}\Big(\frac{z}{2}\Big)^{-\nu}, \quad (\Re \nu >0) \quad \text{ as } z\to 0^+.
\end{equation}
Moreover, it is well known (see \cite[Chapter 5, Section 5.11]{Lebedev}) that
\begin{equation}
\label{eq:asympInf}
K_{\nu}(z)=C e^{-z} z^{-1/2}+ \tilde{R}_{\nu}(z),\qquad
| \tilde{R}_{\nu}(z)|\le C_{\nu} e^{-z}z^{-3/2}, \quad |\operatorname{arg} z|\le\pi-\delta.
\end{equation}
We have the integral representation for the Macdonald's functions, also called Sommerfeld integral (see for instance \cite[p. 407]{NUV} or \cite[Chapter 5, (5.10.25)]{Lebedev}),
\begin{equation}
\label{formula:Bessel}
K_{\nu}(z)=2^{-\nu-1}z^\nu \int_0^\infty e^{-(t+\frac{1}{4t}z^2)} t^{-\nu-1}\,dt.
\end{equation}

Let $s\in (0,1)$, $m>0$ and $f$ be a real function with suitable decay at infinity, for instance $f\in C_b^2(\RN)$. Then $L_m^{s}(f)$ has a pointwise representation as
\begin{equation}
\label{eq:pointwise}
L_m^{s} f(x)=C_{N,s} m^{\frac{N+2s}{2}} \operatorname{P.V.} \int_{\mathbb{R}^N} \frac{f(x)-f(y)}{|x-y|^{\frac{N+2s}{2}}} K_{\frac{N+2s}{2}}(m|x-y|)\,dy + m^{2s} f(x), \quad
\forall x\in \RN,
\end{equation}
where $C_{N,s}$ is a normalization positive constant given by
\begin{equation}\label{constant}
C_{N,s}=-\frac{2^{1+s-N/2}}{ \pi^{\frac{N}{2}}\Gamma(-s)}.
\end{equation}
 Following \cite{FF2}, we  define the scalar product:
\begin{align*} \langle f,g\rangle_{\mathbb{H}^s_m (\RN)} &=\int_{\RN} \widehat{f}(\xi) \overline{\widehat{g}(\xi)}  (|\xi|^2+m^2)^s d\xi\\
&=  \frac{C_{N,s}}{2} m^{\frac{N+2s}{2}} \int_{\mathbb{R}^N} \int_{\mathbb{R}^N} \frac{\big(f(x)-f(y)\big)\big(g(x)-g(y)\big)}{|x-y|^{\frac{N+2s}{2}}}K_{\frac{N+2s}{2}}(m|x-y|)\,dy \, dx\\
&\quad  + m^{2s} \int_{\mathbb{R}^N} f(x)g(x) dx.
\end{align*}
The corresponding norm is
\begin{align*}\|f\|^2_{\mathbb{H}^s_m (\RN)} &= \int_{\RN}  |\widehat{f}(\xi)  | ^2  (|\xi|^2+m^2)^s d\xi\\
&=   \frac{C_{N,s}}{2}  m^{\frac{N+2s}{2}}  \int_{\mathbb{R}^N} \int_{\mathbb{R}^N} \frac{\big(f(x)-f(y)\big)^2}{|x-y|^{\frac{N+2s}{2}}}K_{\frac{N+2s}{2}}(m|x-y|)\,dy \, dx  + m^{2s} \|f\|_{L^2(\RN)}^2.\end{align*}
Then we define $\mathbb{H}^s_m (\RN)$ as the completion of  $C_0^{\infty}$ with the norm $\| \cdot\|_{\mathbb{H}^s_m (\RN)}$. If $m > 0$, $\mathbb{H}^s_m (\RN)$ coincides with the standard Sobolev space $\mathbb{H}^s(\RN)$. Thus, we can avoid writing the subscript $m$. 
Notice that, by Plancherel's theorem, 
$$\|L^{s}_m u\|_{L^2(\RN)}= \|u\|_{\mathbb{H}^{2s}_m(\RN)}.$$ Thus $L^s_m:\mathbb{H}^{2s}(\RN) \to  L^2(\RN)$ is a bounded linear operator.

\subsection{Leibniz rule and pointwise estimates}

Consider the operator
\begin{equation*}
H^s_m(f,g):=L_m^s(fg)-fL_m^s(g)-gL_m^s(f).
\end{equation*}
Indeed, $H_m^s$ is the remainder arising in the fractional Leibniz rule associated to our operator $L_m^s$. Moreover, $2H_m^s(f,g)$ is known in the literature as \emph{carr\'e du champ} operator, see definition and properties in
\cite[Subsection 1.4.2]{BGL}. Most of the properties we use are proved in \cite{BGL} for a general L\'evy process having an infinitesimal generator $L$. All the necessary information is stated in the proposition below, whose proof is just a consequence of the symmetry of the kernel, so we omit the details.

\begin{proposition}
\label{prop:leibniz}
Let $0<s<1$. For $f,g\in \mathbb{H}^s(\RN)  $     we have, for all $x\in \RN$,
\begin{equation*}
H^s_m(f,g)(x)=-C_{N,s} m^{\frac{N+2s}{2}}\int_{\mathbb{R}^N} \frac{\big(f(x)-f(y)\big)\big(g(x)-g(y)\big)}{|x-y|^{\frac{N+2s}{2}}}K_{\frac{N+2s}{2}}(m|x-y|)\,dy-m^{2s}f(x)g(x)
\end{equation*}
where $C_{N,s}$ is as in \eqref{constant}. In particular,
\begin{equation*}
H^s_m(f,f)(x)=-C_{N,s} m^{\frac{N+2s}{2}}\int_{\mathbb{R}^N} \frac{\big(f(x)-f(y)\big)^2} {|x-y|^{\frac{N+2s}{2}}} K_{\frac{N+2s}{2}}(m|x-y|)\,dy -m^{2s}f^2(x)\le 0.
\end{equation*}
Moreover,
\begin{equation}
\label{eq:CC}
L^s_m(f^2)(x) = 2 f(x) L^s_m(f)(x) +H^s_m(f,f)(x), \qquad L^s_m(f^2)(x) \le 2 f L^s_m(f)(x).
\end{equation}
\end{proposition}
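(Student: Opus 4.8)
The plan is to insert the pointwise representation \eqref{eq:pointwise} of $L^s_m$ directly into $H^s_m(f,g)=L^s_m(fg)-fL^s_m(g)-gL^s_m(f)$ and simplify. I would split each application of \eqref{eq:pointwise} into its singular-integral part and its local part $m^{2s}(\cdot)$. The local parts combine to $m^{2s}\bigl(f(x)g(x)-f(x)g(x)-g(x)f(x)\bigr)=-m^{2s}f(x)g(x)$. For the integral parts, writing $\delta_y h(x):=h(x)-h(y)$, the relevant numerator is the combination
$$\delta_y(fg)(x)-f(x)\,\delta_y g(x)-g(x)\,\delta_y f(x),$$
and expanding $f(x)g(x)-f(y)g(y)$ shows in one line that this equals $-\delta_y f(x)\,\delta_y g(x)=-(f(x)-f(y))(g(x)-g(y))$. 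This produces exactly the asserted formula for $H^s_m(f,g)$, and the choice $g=f$ gives the numerator $-(f(x)-f(y))^2$.

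Second, I would check that the resulting integral is absolutely convergent, so that the principal value inherited from \eqref{eq:pointwise} may be dropped in the formula for $H^s_m$. For $f,g\in C^2_b(\RN)$ one has $|\delta_y f(x)\,\delta_y g(x)|\lesssim \min\{1,|x-y|^2\}$; combined with $|x-y|^{-\frac{N+2s}{2}}K_{\frac{N+2s}{2}}(m|x-y|)\sim c\,|x-y|^{-(N+2s)}$ as $y\to x$ (by \eqref{eq:asymp0}) and the exponential decay of $K_{\frac{N+2s}{2}}$ at infinity (by \eqref{eq:asympInf}), the integrand is $O(|x-y|^{2-N-2s})$ near the diagonal, hence integrable since $2-2s>0$, and integrable for $|x-y|$ large. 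Thus $H^s_m(f,g)(x)$ is well defined pointwise and the principal value is superfluous.

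Third, for the sign of $H^s_m(f,f)$: the Macdonald function $K_\nu$ is strictly positive on $(0,\infty)$, $m^{\frac{N+2s}{2}}>0$, and $C_{N,s}>0$ for $0<s<1$ since $\Gamma(-s)<0$ on that range makes $-1/\Gamma(-s)>0$ in \eqref{constant}. Hence $-C_{N,s}<0$, the integral term in the formula for $H^s_m(f,f)(x)$ is $\le 0$, and the local term $-m^{2s}f^2(x)\le 0$, which gives $H^s_m(f,f)(x)\le 0$. Finally, \eqref{eq:CC} is immediate: the identity $L^s_m(f^2)=2fL^s_m(f)+H^s_m(f,f)$ is merely the definition of $H^s_m$ with $g=f$, rearranged, and the inequality $L^s_m(f^2)\le 2fL^s_m(f)$ then follows from $H^s_m(f,f)\le 0$.

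I do not expect a genuine obstacle: the heart of the statement is the elementary difference identity together with the kernel asymptotics already recorded in \eqref{eq:asymp0}–\eqref{eq:asympInf} and the symmetry of the kernel in $x\leftrightarrow y$. The only step deserving a touch of care is the absolute-convergence estimate that legitimizes discarding the principal value; everything else is bookkeeping.
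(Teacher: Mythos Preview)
Your proposal is correct and is exactly the computation the paper has in mind: the authors merely write that ``the proof is just a consequence of the symmetry of the kernel, so we omit the details,'' and what you outline---inserting \eqref{eq:pointwise}, combining the local $m^{2s}$ terms, and using the elementary identity $\delta_y(fg)-f\,\delta_y g-g\,\delta_y f=-\delta_y f\,\delta_y g$---is precisely that omitted detail, together with the routine integrability check justifying removal of the principal value.
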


\subsection{Construction of unbounded eigenfunctions}

In this subsection we will construct a special family of eigenfunctions of the operator $L^s_m$. First we prove some integral formulas involving the Bessel functions.

\begin{lemma}
\label{lemma:Bessel}
 Let  $N\ge 1$, $s\in  (0,1)$, $\lambda \in \RN$  with $|\lambda|<1$. Then
\begin{equation}\label{Bessel:identity1}
C_{N,s}\,  \int_{\RN} \frac{1-e^{  \lambda \cdot z}} {|z|^{\frac{N+2s}{2}}} K_{\frac{N+2s}{2}}(|z|)\, dz=\big( 1- |\lambda|^2\big)^s-1,
\end{equation}
where $C_{N,s}$ is given by \eqref{constant}.

Moreover, when $N-2s<1$ and $|\lambda|=1$, the following also holds
\begin{equation}\label{Bessel:identity2}
C_{N,s} \int_{\RN} \frac{1-e^{\lambda \cdot z}}{|z|^{\frac{N+2s}{2}}} K_{\frac{N+2s}{2}}(|z|)\,dz= -1.
\end{equation}

\end{lemma}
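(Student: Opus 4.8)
The plan is to compute the integral on the left-hand side of \eqref{Bessel:identity1} by recognizing it as the value of the operator $L_1^s$ (the mass-one relativistic operator) applied to the exponential function $e^{\lambda\cdot x}$ at the origin — or rather, its negative, since the pointwise formula \eqref{eq:pointwise} with $m=1$ applied to $f(x)=e^{\lambda\cdot x}$ reads
$$
L_1^s f(x) = C_{N,s}\int_{\RN}\frac{f(x)-f(y)}{|x-y|^{(N+2s)/2}}K_{(N+2s)/2}(|x-y|)\,dy + f(x),
$$
and evaluating at $x=0$, after the change of variables $z=y$, gives exactly $C_{N,s}\int_{\RN}\frac{1-e^{\lambda\cdot z}}{|z|^{(N+2s)/2}}K_{(N+2s)/2}(|z|)\,dz + 1$. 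So the identity \eqref{Bessel:identity1} is equivalent to the assertion that $e^{\lambda\cdot x}$ is a (generalized) eigenfunction of $L_1^s$ with eigenvalue $(1-|\lambda|^2)^s$. First I would verify this on the Fourier side: formally $\widehat{e^{\lambda\cdot x}}$ is a complex translate of a delta, and the symbol $(|\xi|^2+1)^s$ evaluated at $\xi = i\lambda$ is $(-|\lambda|^2+1)^s = (1-|\lambda|^2)^s$, which is well-defined and positive precisely because $|\lambda|<1$. To make this rigorous rather than formal, I would either (a) use the Sommerfeld/subordination representation \eqref{formula:Bessel} to write the kernel as a Gaussian average — namely, plug \eqref{formula:Bessel} into the singular integral, interchange the order of integration (justified by the P.V. and the absolute convergence once the Gaussian is in place), and use the elementary Gaussian identity $\int_{\RN} e^{-|z|^2/(4t)}e^{\lambda\cdot z}\,dz = (4\pi t)^{N/2}e^{t|\lambda|^2}$, reducing everything to a one-dimensional integral in $t$ that evaluates via the Gamma function to give the stated power; or (b) invoke the subordination formula expressing $L_1^s$ as $\int_0^\infty (e^{-\tau(-\Delta+1)} - \mathrm{Id})\,\frac{d\tau}{\tau^{1+s}}$ up to a constant, and use that $e^{-\tau(-\Delta+1)}e^{\lambda\cdot x} = e^{-\tau(1-|\lambda|^2)}e^{\lambda\cdot x}$, so the whole thing collapses to $c_s\int_0^\infty(e^{-\tau(1-|\lambda|^2)}-1)\tau^{-1-s}\,d\tau = (1-|\lambda|^2)^s - 1$ after the standard normalization of the constant $C_{N,s}$. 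Approach (a) is the cleanest since it works directly with the integral as written.

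For the boundary identity \eqref{Bessel:identity2}, the point is that when $|\lambda|=1$ the candidate eigenvalue $(1-|\lambda|^2)^s = 0$, so formally the right-hand side should be $0 - 1 = -1$, matching the claim; the content is that the integral still converges under the extra hypothesis $N-2s<1$. I would obtain this by a limiting argument: take $|\lambda_k|<1$ with $\lambda_k\to\lambda$, apply \eqref{Bessel:identity1} for each $k$, and pass to the limit. On the right-hand side $(1-|\lambda_k|^2)^s-1\to -1$ trivially. On the left-hand side I need dominated convergence, which is where the hypothesis $N-2s<1$ enters: near $z=0$ we have $1-e^{\lambda_k\cdot z} = O(|z|)$ (the linear term, since the P.V. symmetrizes the odd part — more precisely one should symmetrize and use $2 - e^{\lambda\cdot z} - e^{-\lambda\cdot z} = O(|z|^2)$ so integrability near $0$ is automatic for all $s<1$), and near $z=\infty$ the factor $K_{(N+2s)/2}(|z|)\sim e^{-|z|}|z|^{-1/2}$ from \eqref{eq:asympInf} forces exponential decay, so the only delicate regime is the interplay of the polynomial growth $e^{\lambda\cdot z}\le e^{|z|}$ with $e^{-|z|}$: we get an integrand behaving like $|z|^{-(N+2s)/2 - 1/2} = |z|^{-(N+2s+1)/2}$ times something bounded, wait — one must be careful. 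Let me reconsider: the genuinely dangerous direction is $z$ nearly parallel to $\lambda$ with $|z|$ large, where $e^{\lambda\cdot z - |z|}$ is only polynomially small, and there the surface-measure factor combined with $|z|^{-(N+2s)/2}|z|^{-1/2}$ gives, after integrating over the sphere, a one-dimensional tail like $\int^\infty r^{N-1}r^{-(N+2s)/2 - 1/2}\,dr = \int^\infty r^{(N-2s-2)/2}\,dr$, which converges iff $(N-2s-2)/2 < -1$, i.e. $N-2s<0$ — so the honest argument needs to track the angular integration near the pole more carefully, and the condition $N-2s<1$ is exactly what makes the borderline angular integral finite.

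The main obstacle is this last convergence analysis at $|\lambda|=1$: one cannot be cavalier, because when $\lambda\cdot z \to |z|$ the integrand loses its exponential decay and the remaining polynomial decay is genuinely borderline. The clean way to handle it is to keep the Sommerfeld representation \eqref{formula:Bessel} in force throughout — i.e., run approach (a) with $|\lambda|\le 1$ directly: after inserting \eqref{formula:Bessel} and using Fubini (legitimate by Tonelli on the symmetrized positive integrand once we check finiteness), the inner Gaussian integral $\int_{\RN} e^{-|z|^2/(4t)}(1 - \tfrac12 e^{\lambda\cdot z} - \tfrac12 e^{-\lambda\cdot z})\,dz$ equals $(4\pi t)^{N/2}(1 - e^{t|\lambda|^2})$, which is perfectly well-behaved at $|\lambda|=1$, and the remaining $t$-integral $c\int_0^\infty (1-e^{t|\lambda|^2})t^{-1-s-?}\,dt$ converges at both ends for all $s\in(0,1)$; the condition $N-2s<1$ must then re-emerge as precisely the condition under which this Fubini interchange is justified, i.e. under which $\int_{\RN}\int_0^\infty |\cdots|\,dt\,dz<\infty$ with the genuine (non-symmetrized) integrand. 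I would therefore present \eqref{Bessel:identity1} via the subordination/Gaussian computation, and then present \eqref{Bessel:identity2} as the $|\lambda|\uparrow 1$ limit, isolating in one displayed estimate the verification that, when $N-2s<1$, the integrand $\dfrac{1-e^{\lambda\cdot z}}{|z|^{(N+2s)/2}}K_{(N+2s)/2}(|z|)$ is dominated by an $L^1(\RN)$ function uniformly in $|\lambda|\le 1$, using \eqref{eq:asymp0} near the origin and \eqref{eq:asympInf} at infinity together with the angular bound near the pole.
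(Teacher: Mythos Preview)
Your approach (a) is exactly what the paper does: insert the Sommerfeld representation \eqref{formula:Bessel}, swap the order of integration, compute the Gaussian $z$-integral to get $(4\pi t)^{N/2}(1-e^{t|\lambda|^2})$, and evaluate the resulting $t$-integral via \eqref{formula:spower}; for \eqref{Bessel:identity2} the paper likewise runs the identical computation directly (not a limiting argument), arriving at $\int_0^\infty(e^{-t}-1)\,t^{-1-s}\,dt=\Gamma(-s)$. The paper simply asserts the integrals are well defined via the asymptotics \eqref{eq:asymp0}, \eqref{eq:asympInf} and attributes the restriction $N-2s<1$ to integrability near the origin, without the detailed tail analysis you sketch (note your arithmetic there is off by one: the crude bound gives a radial tail $r^{(N-2s-3)/2}$, whose integrability is exactly $N-2s<1$).
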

\begin{proof}
Observe that the  integrals are well defined (this can be checked by using the asymptotics of the Bessel function \eqref{eq:asymp0} and \eqref{eq:asympInf}). The restriction for $N$ and $s$  for the second identity comes from the integrability of the integral near the origin.

The identities follow by using the integral representation of the Bessel function \eqref{formula:Bessel}
and the identity
\begin{equation}
\label{formula:spower}
\gamma^s= \frac{1}{\Gamma(-s)} \int_0^\infty(e^{-t\gamma}-1)\frac{dt}{t^{1+s}}, \quad \gamma>0, \quad 0<s<1.
\end{equation}
For the proof of  \eqref{Bessel:identity1} observe that
\begin{align*}
C_{N,s} \int_{\RN} \frac{1-e^{ \lambda\cdot z}} {|z|^{\frac{N+2s}{2}}} K_{\frac{N+2s}{2}}(|z|)\, dz &= C_{N,s} 2^ { -\frac{N+2s}{2}-1} \,  \int_{\RN} ( 1-e^{ \lambda\cdot z} )
\int_0^\infty e^{\big(-t-\frac{|z|^2}{4t}  \big)} \frac{dt}{t^{\frac{N+2s}{2}+1}} dz \\
&=C_{N,s} 2^ { -\frac{N+2s}{2}-1} \, \int_0^\infty  \left[\int_{\RN}   ( 1-e^{ \lambda\cdot z} ) e^{-\frac{|z|^2}{4t}  } dz \right]   e^{-t}   \frac{dt}{t^{\frac{N+2s}{2}+1}} .
\end{align*}
Notice that the integral in $z$, after changing variables to $z=2\sqrt{t} y$, equals
\begin{align*}
 \int_{\RN}   ( 1-e^{ 2 \sqrt{t} \lambda \cdot  y } ) e^{-|y|^2  } (2\sqrt{t})^N dy &= 2^{N}t^{\frac{N}{2}} \Big ( \int_{\RN}    e^{-|y|^2  }dy -  \int_{\RN}   e^{ 2 \sqrt{t}
 \lambda\cdot  y -|y|^2  } dy  \Big) \\
&= 2^{N}t^{\frac{N}{2}}\Big( \sqrt{\pi}^N -  e^{ t |\lambda|^2 } \int_{\RN}   e^{ - (y -    \sqrt{t} \lambda)^2  } dy \Big) \\
&= 2^{N}t^{\frac{N}{2}} \sqrt{\pi}^N  \big(1 -  e^{ t |\lambda|^2 } \big).
\end{align*}
Then using the explicit form of the constant given in \eqref{constant} we obtain that
\begin{align*}
C_{N,s} \int_{\RN} \frac{1-e^{ \lambda \cdot z}} {|z|^{\frac{N+2s}{2}}} K_{\frac{N+2s}{2}}(|z|)\, dz& =  C_{N,s} 2^ { -\frac{N+2s}{2}-1 +N}   \sqrt{\pi}^N  \, \int_0^\infty
t^{\frac{N}{2}}  \Big (1 -  e^{ t |\lambda|^2 }  \Big)  e^{-t}   \frac{dt}{t^{\frac{N+2s}{2}+1}} \\
&=C_{N,s} 2^ {  \frac{N-2s}{2}-1 }   \sqrt{\pi}^N  \, \int_0^\infty   \Big(e^{-t} -1+1-  e^{  - t (1- |\lambda|^2 ) }  \Big)    \frac{dt}{t^{s+1}} \\
&=C_{N,s} 2^ {  \frac{N-2s}{2}-1 }   \sqrt{\pi}^N  \,  \Gamma(-s) \Big( 1 -    \big(1- |\lambda|^2 \big)^s    \Big)  =  \big(1- |\lambda|^2 \big)^s-1.
\end{align*}
For \eqref{Bessel:identity2} the proof is the same, except for the  last integral is $ \int_0^\infty   \big (e^{-t} - 1  \big)    \frac{dt}{t^{s+1}}  dt$, which equals $\Gamma(-s)$ according
to \eqref{formula:spower}.

\end{proof}

\begin{proposition}
\label{prop:analy}
Let  $N\ge 1$, $s\in  [0,1]$, $\lambda \in \RN$  with $|\lambda|<m$. Then
\begin{equation}\label{formula:eigenfunction}
 L_m^s e^{\lambda \cdot x}=(-|\lambda|^2+m^2)^s e^{\lambda\cdot x}, \quad \text{a.e. } x\in \mathbb{R}^N.
\end{equation}
Moreover, when $N-2s<1$ and  $|\lambda|=m$ then
\begin{equation*}
 L_m^s e^{\lambda \cdot x}=0, \quad \text{a.e. } x\in \mathbb{R}^N.
 \end{equation*}
\end{proposition}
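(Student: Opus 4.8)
The plan is to verify the eigenfunction identity \eqref{formula:eigenfunction} directly from the pointwise singular integral representation \eqref{eq:pointwise} of $L_m^s$, using the integral formulas established in Lemma \ref{lemma:Bessel}. First I would fix $\lambda\in\RN$ with $|\lambda|<m$ and apply the pointwise formula \eqref{eq:pointwise} to the function $f(x)=e^{\lambda\cdot x}$. Since $e^{\lambda\cdot x}-e^{\lambda\cdot y}=e^{\lambda\cdot x}\big(1-e^{\lambda\cdot(y-x)}\big)$, the exponential factor $e^{\lambda\cdot x}$ pulls out of the principal value integral, leaving
\[
L_m^s e^{\lambda\cdot x}=e^{\lambda\cdot x}\left(C_{N,s}\, m^{\frac{N+2s}{2}}\operatorname{P.V.}\!\int_{\RN}\frac{1-e^{\lambda\cdot z}}{|z|^{\frac{N+2s}{2}}}K_{\frac{N+2s}{2}}(m|z|)\,dz+m^{2s}\right),
\]
after the change of variables $z=x-y$ (the kernel being even in $z$, and replacing $z$ by $-z$ if desired).

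Next I would reduce the remaining integral to the normalized case $m=1$ handled in Lemma \ref{lemma:Bessel}. Substituting $z=w/m$ gives $dz=m^{-N}\,dw$, $|z|^{\frac{N+2s}{2}}=m^{-\frac{N+2s}{2}}|w|^{\frac{N+2s}{2}}$, and $K_{\frac{N+2s}{2}}(m|z|)=K_{\frac{N+2s}{2}}(|w|)$, while the exponent becomes $\lambda\cdot z=(\lambda/m)\cdot w$ with $|\lambda/m|<1$. Collecting the powers of $m$:
\[
m^{\frac{N+2s}{2}}\cdot m^{-N}\cdot m^{\frac{N+2s}{2}}=m^{2s},
\]
so the bracket becomes
\[
m^{2s}\left(C_{N,s}\int_{\RN}\frac{1-e^{(\lambda/m)\cdot w}}{|w|^{\frac{N+2s}{2}}}K_{\frac{N+2s}{2}}(|w|)\,dw+1\right)=m^{2s}\Big(\big(1-|\lambda/m|^2\big)^s-1+1\Big)=m^{2s}\big(1-|\lambda|^2/m^2\big)^s,
\]
using \eqref{Bessel:identity1} with $\lambda$ replaced by $\lambda/m$. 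Since $m^{2s}\big(1-|\lambda|^2/m^2\big)^s=(m^2-|\lambda|^2)^s$, this yields exactly \eqref{formula:eigenfunction}. For the boundary case $|\lambda|=m$ (under $N-2s<1$), the same computation applies with $|\lambda/m|=1$, and \eqref{Bessel:identity2} gives $C_{N,s}\int(\cdots)= -1$, so the bracket is $m^{2s}(-1+1)=0$, giving $L_m^s e^{\lambda\cdot x}=0$.

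The main obstacle I anticipate is justifying that the pointwise formula \eqref{eq:pointwise} is actually applicable to $f(x)=e^{\lambda\cdot x}$: this function is not in $C_b^2(\RN)$, so one cannot cite \eqref{eq:pointwise} as a black box. The resolution is to check directly that the resulting singular integral converges: near $z=0$ the integrand behaves like $|z|^{-\frac{N+2s}{2}}\cdot|z|^{-\frac{N+2s}{2}}$ times a quadratic vanishing from $1-e^{\lambda\cdot z}-\lambda\cdot z\sim O(|z|^2)$ (with the linear part killed by the principal value / symmetry), which is integrable since $2-(N+2s)+N=2-2s>0$; at infinity the condition $|\lambda|\le m$ together with the exponential decay $K_{\frac{N+2s}{2}}(m|z|)\sim e^{-m|z|}|z|^{-1/2}$ from \eqref{eq:asympInf} makes $e^{\lambda\cdot z}K_{\frac{N+2s}{2}}(m|z|)$ integrable (strictly so when $|\lambda|<m$; when $|\lambda|=m$ the extra polynomial factor and the constraint $N-2s<1$ secure convergence). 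Hence the formal manipulation above is legitimate, and alternatively one could instead invoke the Fourier definition \eqref{Defn:Fourier} on a suitable class of tempered distributions and note that $e^{\lambda\cdot x}$ has Fourier transform supported in the analytic continuation $\xi\mapsto i\lambda$ of the symbol $(|\xi|^2+m^2)^s$, evaluated at $(|i\lambda|^2+m^2)^s=(m^2-|\lambda|^2)^s$ — but the singular-integral route is cleaner and self-contained given Lemma \ref{lemma:Bessel}.
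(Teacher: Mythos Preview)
Your proposal is correct and follows essentially the same approach as the paper: apply the pointwise representation \eqref{eq:pointwise}, factor out $e^{\lambda\cdot x}$, rescale to reduce to the $m=1$ integral, and invoke Lemma \ref{lemma:Bessel} with parameter $\lambda/m$. The paper's version is terser (it performs the change of variables $z\mapsto mz$ in one step and simply cites Lemma \ref{lemma:Bessel} for well-definedness), and it explicitly disposes of the endpoint cases $s=0$ and $s=1$ as trivial, which you should add since the statement allows $s\in[0,1]$; conversely, your discussion of why the singular integral converges for the unbounded function $e^{\lambda\cdot x}$ is a point the paper leaves implicit.
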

\begin{proof} First, observe that the cases $s=0$ and $s=1$ follow trivially. Let $N\ge 1$, $s\in (0,1)$ and  $\lambda \in \RN$ as in the hypothesis. Then  for $x\in \RN $
$$
 L_m^s(e^{\lambda \cdot (\cdot)})(x) - m^{2s}e^{\lambda \cdot x} = C_{N,s} \,  m^{ 2s }   e^{\lambda \cdot x }  \int_{\RN} \frac{1-e^{\frac{1}{m} \lambda \cdot z }}{|z |^{\frac{N+2s}{2}}} K_{\frac{N+2s}{2}}(|z|) \,dz .
$$
The integral is well defined, as proved in Lemma \ref{lemma:Bessel} with $\lambda/m$ as the corresponding parameter. Now we use Lemma \ref{lemma:Bessel} with $\lambda/m \in \RN$,
$|\lambda|/m \le  1$ and the result follows in each of the cases.
\end{proof}
\begin{remark}
\label{rem:analyti}
The identity \eqref{formula:eigenfunction} cannot be extended to $|\lambda|>m$ because the function $\C\ni z\to (z^2 + m^2) ^s$ is not well defined in $\{ it :t > m\}$.
\end{remark}

\section{The heat equation for the fractional relativistic operator}\label{Sect:Heat}

We devote this section to the linear heat equation
\begin{equation}\label{eq:heat}
\begin{cases}
   u_t(t,x) + (-\Delta+m^2)^{s}u(t,x)=0, &\quad x\in \RN, \,\,\, t>0, \\
u(0,x)=u_0(x), &\quad x\in \RN.
    \end{cases}
    \end{equation}

\subsection{Fundamental solution}\label{Fundamental:Solution}
The fundamental solution $K_t^s$ for the heat equation involving $L_m^s$ is defined via the Fourier transform as
$$
\widehat{K}_t^s(\xi)=e^{-t(|\xi|^2+m^2)^s}.
$$
This will correspond to the probability density function of the associated stable relativistic process. Estimates for the fundamental solution are well-known and can be found in \cite[Subsection 6.4]{Sz} (see also \cite[Theorem 1.2]{CKK} and \cite[Theorem 4.1]{CKS}). For our purposes, we will emphasize that $K^s_t(x)$ is a smooth function for $t>0$ and $x\in \mathbb{R}^N$, since its Fourier transform decays exponentially. Moreover, 
$K_t^s$ has an exponential decay in $|x|$ for small times:
$$
K_t^s(x)\sim c(t) |x|^{-N-2s} e^{ -\widetilde{c}\,m|x|}, \quad  x \in \R^N,
$$
with a positive constant $\widetilde{c}$ independent of time. 

\subsubsection{Integral representation}

The following subordination formula is shown in \cite[(7)]{Ry}, see also \cite{GrRy},
\begin{equation}
\label{eq:Ksubo}
K_t^s(x)=\int_0^{\infty}\Theta^s_t(\rho)e^{-m^2\rho}\frac{e^{-\frac{|x|^2}{4\rho}}}{(4\pi \rho)^{N/2}} \,d\rho.
\end{equation}
Here, $\Theta_t^s(\rho)$, $\rho>0$, is the density function of the $s$-stable process whose Laplace transform is $e^{-t\lambda^{s}}$. In the case $s=1/2$, $K_t^{1/2}$ has an explicit expression, as stated in the Introduction. We compute the following weighted $L^1$ norm of $K^s_t$.

\begin{lemma}
Let $0<s<1$. We have, for all $|\lambda| \le m$,
\begin{equation}\label{weightedL1:K}
\|e^{\lambda \cdot (\cdot)}  K_t^s(\cdot) \|_{L^1(\RN)} = e^{- (m^2 - |\lambda|^2)^s\, t},\quad t>0.
\end{equation}
\end{lemma}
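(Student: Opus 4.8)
The strategy is to compute $\|e^{\lambda\cdot(\cdot)}K_t^s(\cdot)\|_{L^1(\RN)}$ directly from the integral (subordination) representation \eqref{eq:Ksubo}, reducing it to a Gaussian integral in the spatial variable and then to the defining property of the $s$-stable density $\Theta_t^s$. First I would write
$$
\|e^{\lambda\cdot(\cdot)}K_t^s\|_{L^1} = \int_{\RN} e^{\lambda\cdot x}\int_0^\infty \Theta_t^s(\rho)\,e^{-m^2\rho}\,\frac{e^{-|x|^2/(4\rho)}}{(4\pi\rho)^{N/2}}\,d\rho\,dx,
$$
where the absence of absolute values is justified because $K_t^s\ge 0$ (it is a probability density) and $e^{\lambda\cdot x}>0$. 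Then I would apply Tonelli's theorem to interchange the order of integration — all integrands being nonnegative, this is automatic and requires no a priori finiteness — so that the spatial integral becomes the inner one.

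Next, for fixed $\rho>0$ I would evaluate the Gaussian integral
$$
\int_{\RN} e^{\lambda\cdot x}\,\frac{e^{-|x|^2/(4\rho)}}{(4\pi\rho)^{N/2}}\,dx = e^{\rho|\lambda|^2},
$$
by completing the square (writing $\lambda\cdot x - |x|^2/(4\rho) = -|x-2\rho\lambda|^2/(4\rho) + \rho|\lambda|^2$ and using that the normalized Gaussian integrates to $1$). Substituting back, the $\rho$-integral collapses to
$$
\int_0^\infty \Theta_t^s(\rho)\,e^{-m^2\rho}\,e^{\rho|\lambda|^2}\,d\rho = \int_0^\infty \Theta_t^s(\rho)\,e^{-(m^2-|\lambda|^2)\rho}\,d\rho = e^{-t(m^2-|\lambda|^2)^s},
$$
where the last equality is precisely the Laplace transform of $\Theta_t^s$ evaluated at $\gamma = m^2-|\lambda|^2 \ge 0$, which by the definition recalled after \eqref{eq:Ksubo} equals $e^{-t\gamma^s}$.

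The only genuine point requiring care — the would-be "main obstacle" — is the hypothesis $|\lambda|\le m$: it guarantees $m^2-|\lambda|^2\ge 0$, so that $e^{\rho|\lambda|^2}e^{-m^2\rho}$ stays bounded (indeed $\le 1$) and the Laplace transform of $\Theta_t^s$ is being evaluated at a point in its domain of analyticity $[0,\infty)$; for $|\lambda|>m$ the integral against $\Theta_t^s$ would involve $e^{+c\rho}$ with $c>0$, and since the $s$-stable density has only a power-type (not super-exponential) decay as $\rho\to\infty$ for $s<1$, the integral would diverge and the identity would fail. Apart from this, everything is a routine Tonelli-plus-Gaussian computation. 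As a consistency check, setting $\lambda=0$ recovers $\|K_t^s\|_{L^1}=e^{-tm^{2s}}$, which matches $\widehat{K}_t^s(0)=e^{-tm^{2s}}$ from the Fourier definition; and for $m=0$, $|\lambda|\le m$ forces $\lambda=0$ and the mass is $1$, consistent with $L_0^s=(-\Delta)^s$ generating a conservative (mass-preserving) semigroup.
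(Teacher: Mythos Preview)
Your proposal is correct and follows essentially the same route as the paper: subordination formula, interchange of integration, Gaussian integral, then the Laplace transform identity for $\Theta_t^s$. The only cosmetic differences are that the paper computes the Gaussian by factoring into one-dimensional integrals rather than completing the square, and it first records the special case $\int_0^\infty e^{-m^2\rho}\Theta_t^s(\rho)\,d\rho=e^{-tm^{2s}}$ via Ryznar's density before applying it at the shifted parameter $m^2-|\lambda|^2$.
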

\begin{proof}
In \cite[p. 3]{Ry}, Ryznar defines the probability density function
$$
\Theta_t^s(\rho,m)=e^{-m^2\rho+m^{2s}t}\Theta_t^s(\rho),\quad \rho>0,
$$
so in particular this means that
\begin{equation}
\label{Ryznar}
\int_0^{\infty}e^{-m^2\rho}\Theta_t^s(\rho)\,d\rho=e^{-tm^{2s}}.
\end{equation}
Now, by \eqref{eq:Ksubo} and Fubini's Theorem,
\begin{equation}
\label{fubin}
\int_{\RN}e^{\lambda\cdot x}\int_0^{\infty}\Theta^s_t(\rho)e^{-m^2\rho}\frac{e^{-\frac{|x|^2}{4\rho}}}{(4\pi \rho)^{N/2}} \,d\rho\,dx=\int_0^{\infty}\Theta^s_t(\rho)e^{-m^2\rho}(4\pi \rho)^{-N/2}\int_{\R}e^{\lambda\cdot x}e^{-\frac{|x|^2}{4\rho}}\,dx \,d\rho.
\end{equation}
Observe that
$$
\int_{\RN}e^{\lambda\cdot x}e^{-\frac{|x|^2}{4\rho}}\,dx=\prod_{i=1}^N\int_{\R}e^{\lambda_i x_i}e^{-\frac{x_i^2}{4\rho}}\,dx_i=e^{|\lambda|^2\rho}\prod_{i=1}^N\int_{\RN}e^{-\big(\frac{x_i}{2\sqrt{\rho}}-\lambda_i\sqrt{\rho}\big)^2}\,dx_i=e^{|\lambda|^2\rho}2^N(\pi\rho)^{N/2}.
$$
Therefore, \eqref{fubin} equals
$$
\int_0^{\infty}\Theta^s_t(\rho)e^{-\rho(m^2-|\lambda|^2)}\,d\rho=e^{-t(m^{2}-|\lambda|^2)^2},
$$
where the equality follows from \eqref{Ryznar}.
\end{proof}

\subsection{Energy estimates}

Let $u$ be a  solution to Problem \eqref{eq:heat} with   $u_0 \in L^1(\RN)\cap L^\infty(\RN)$. Thus $u$ is obtained directly from the fundamental solution and the data
\begin{equation}\label{convolution}
u(t,x)=u_0(x) \ast K^s_t(x), \quad x\in \RN, \,\,\, t>0.
\end{equation}
In view of the smoothness of $ K^s_t(x)$, it follows that $u$ is a smooth function for $t>0$ and $x\in \RN$. Thus, the estimates we give in the following theorem are justified from the regularity point of view.

\begin{proposition}\label{EnergyEstim}
Let $0<s<1$. Let $u$ be the  solution to Problem \eqref{eq:heat} with   $u_0 \in L^1(\RN)\cap L^\infty(\RN)$. 
 Then:
\begin{itemize}
\item (Decay of total mass) We have
\begin{equation}
\label{energy0}
\int_{\RN}u(t,x)\,dx=e^{-m^{2s}t}\int_{\RN}u_0(x)\,dx, \quad t>0.
\end{equation}
\item (Energy estimate) 
For all $0<t<T$ we have
\begin{equation}\label{energy1}
2\int_0^t\int_{\RN} |L^{s/2}_m u(\tau,x)|^2 \,dx \,d\tau +  \int_{\RN} u^2(t,x) \,dx = \int_{\RN} u_0^2(x)\, dx.
\end{equation}
\item (Decay of weighted $L^2$ norm) Assume $u_0 \in L^2(e^{\lambda \cdot x},\RN)$. Then for all $0<t$ and $|\lambda| \le 2m$ we have
\begin{equation}\label{energy:weighted}
\int_{\RN}  u^2 (t,x) e^{\lambda\cdot x} \,dx \le  e^{- (m^2-|\lambda|^2/4)^s t } \int_{\RN}  u_0^2 (x) e^{\lambda\cdot x} \,dx, \quad t\ge 0.
\end{equation}
\end{itemize}

\end{proposition}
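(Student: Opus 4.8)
The plan is to read off \eqref{energy0} and \eqref{energy:weighted} from the explicit representation \eqref{convolution} together with the weighted $L^1$ bound \eqref{weightedL1:K}, and to obtain \eqref{energy1} by a separate Plancherel computation. For \eqref{energy0} I would integrate \eqref{convolution} in $x$ and apply Fubini --- legitimate since $u_0\in L^1(\RN)$ and $K_t^s\ge 0$ (the positivity being immediate from the subordination formula \eqref{eq:Ksubo}) --- to get $\int_{\RN}u(t,x)\,dx=\big(\int_{\RN}u_0\big)\,\|K_t^s\|_{L^1(\RN)}$, and then invoke \eqref{weightedL1:K} with $\lambda=0$, which gives $\|K_t^s\|_{L^1(\RN)}=e^{-m^{2s}t}$.

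For \eqref{energy1} I would pass to the Fourier side, where by \eqref{Defn:Fourier} the solution satisfies $\widehat{u}(t,\xi)=e^{-t(|\xi|^2+m^2)^s}\widehat{u_0}(\xi)$, so that Plancherel gives $\|u(t)\|_{L^2}^2=\int_{\RN}|\widehat{u_0}(\xi)|^2 e^{-2t(|\xi|^2+m^2)^s}\,d\xi$. Since $(|\xi|^2+m^2)^s\ge m^{2s}>0$, one may differentiate under the integral sign on any interval $[a,b]\subset(0,\infty)$, obtaining $\frac{d}{dt}\|u(t)\|_{L^2}^2=-2\int_{\RN}(|\xi|^2+m^2)^s|\widehat{u}(t,\xi)|^2\,d\xi=-2\|L_m^{s/2}u(t)\|_{L^2}^2$; integrating in $\tau$ over $[\varepsilon,t]$ and letting $\varepsilon\downarrow 0$ (using $u_0\in L^2$ and monotone convergence for the continuity of $\tau\mapsto\|u(\tau)\|_{L^2}^2$ at $0$) yields \eqref{energy1}. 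Equivalently, one may pair \eqref{eq:heat} with $u(t,\cdot)$ in $L^2(\RN)$ and use the identity $\langle L_m^s u,u\rangle_{L^2}=\|L_m^{s/2}u\|_{L^2}^2$.

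The substantive step is \eqref{energy:weighted}, and the idea I would use is to split the weight between the initial data and the kernel. Inserting $e^{\lambda\cdot x/2}=e^{\lambda\cdot y/2}\,e^{\lambda\cdot(x-y)/2}$ into \eqref{convolution} produces the factorization $u(t,x)\,e^{\lambda\cdot x/2}=(w\ast g_t)(x)$, where $w:=u_0\,e^{\lambda\cdot(\cdot)/2}$ and $g_t:=K_t^s\,e^{\lambda\cdot(\cdot)/2}$. Young's convolution inequality then gives $\|u(t)\,e^{\lambda\cdot(\cdot)/2}\|_{L^2}\le\|w\|_{L^2}\,\|g_t\|_{L^1}$, where $\|w\|_{L^2}^2=\int_{\RN}u_0^2\,e^{\lambda\cdot x}\,dx<\infty$ by hypothesis and $\|g_t\|_{L^1}=\int_{\RN}K_t^s(z)\,e^{\lambda\cdot z/2}\,dz=e^{-(m^2-|\lambda|^2/4)^s t}$ by \eqref{weightedL1:K} applied with $\lambda/2$ in place of $\lambda$ --- allowed precisely because $|\lambda|\le 2m$, and using $K_t^s\ge 0$ to drop the absolute value. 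Squaring and using $e^{-2(m^2-|\lambda|^2/4)^s t}\le e^{-(m^2-|\lambda|^2/4)^s t}$ (valid since $m^2-|\lambda|^2/4\ge 0$ and $t\ge 0$) gives \eqref{energy:weighted}; in fact this argument yields the stronger estimate with the exponent doubled. I do not expect a genuine obstacle: the only delicate point is noticing that the admissible range $|\lambda|\le 2m$ is exactly what makes $g_t\in L^1$ and puts $\lambda/2$ in the range where \eqref{weightedL1:K} is valid.
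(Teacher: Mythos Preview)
Your proposal is correct and follows essentially the same route as the paper: the convolution representation plus \eqref{weightedL1:K} (with $\lambda=0$ for \eqref{energy0} and with $\lambda/2$ for \eqref{energy:weighted} via Young's inequality), and the pairing $\langle L_m^s u,u\rangle=\|L_m^{s/2}u\|_{L^2}^2$ for \eqref{energy1}. Your remark that the argument actually yields the sharper bound $e^{-2(m^2-|\lambda|^2/4)^s t}$ is correct; the paper's displayed computation in fact produces this factor as well (the weaker exponent in the statement is just a harmless relaxation).
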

\begin{proof}
The first identity follows from \eqref{transformation} and mass conservation for Problem \eqref{eq:proRe2}. On the other hand, \eqref{energy1} follows from the fact that, formally,
$$
\frac{d}{dt}\int_{\RN}  u^2(t,x)\, dx = -2 \int_{\RN}  u L^s_mu(t,x) \,dx = -2 \int_{\RN}  |L^{s/2}_m u(t,x)|^2 \,dx.
$$
Notice that the identities in \eqref{energy0} and \eqref{energy1} are typical energy estimates for diffusion equations. For the inequality \eqref{energy:weighted} we proceed as follows. Let $\lambda \in \RN$ with $|\lambda|\le 2m$. Then, by using the representation \eqref{convolution} and \eqref{weightedL1:K} with $\lambda/2$, we derive that
  \begin{align*}
 \int_{\RN} u^2(t,x)  e^{\lambda \cdot x} \,dx   &  =   \int_{\RN} e^{\lambda \cdot x} ((K_t^{s} \ast u_0)(x))^2 \,dx \\
     & = \int_{\RN} \Big(  \int_{\RN}  e^{\lambda/2 \cdot (x-y)} K_t^s(x-y) e^{\lambda/2 \cdot y}u_0(y) \, dy    \Big)^2\,dx   \\[2mm]
     &=\|e^{\lambda/2 \cdot (\cdot)} K_t^s(\cdot) \ast (e^{\lambda/2 \cdot (\cdot)}u_0(\cdot)) \|_{L^2(\RN)}^2 \\[2mm]
     &\le \|e^{\lambda/2 \cdot (\cdot)} K_t^s(\cdot)\|_{L^1(\RN)}^2 \| (e^{\lambda/2 \cdot (\cdot)}u_0(\cdot)) \|_{L^2(\RN)}^2  \\[2mm]
     &= e^{-(m^2-|\lambda|^2/4)^s t } \,  \| (e^{\lambda/2 \cdot (\cdot)}u_0(\cdot)) \|_{L^2(\RN)},
  \end{align*}
  as desired.
\end{proof}

\subsection{Construction of separate variable solutions}

With the tools we have so far, it is easy to construct explicit separate variables solutions  $\omega: (0,T)\times \R^N  \to \R$ to the equation
\begin{equation}\label{FHE}
\partial_t\omega(t,x)=- L^s_m \omega (t,x),\quad x\in \RN, \,\,\, t>0.
\end{equation}
We are interested in spatial increasing solutions. By Proposition \ref{prop:analy} we obtain that
\begin{equation*}
\omega_\lambda (t,x)=e^ {-(-|\lambda|^ 2+m^ 2)^ s \, t} e^ {\lambda \cdot x},\quad x\in \RN, \,\,\, t>0,
\end{equation*}
verifies the equation \eqref{FHE} for every $\lambda \in \RN$ with $|\lambda|<m$.  Moreover
$$L^s_m \omega (t,x)= (-|\lambda|^ 2+m^ 2)^ s \omega (t,x),\quad x\in \RN, \,\,\, t>0.
$$


\subsection{Log-convexity of the weighted functional for the linear heat equation}
\label{subseclog}

Let $u$ be the solution to Problem \eqref{eq:heat} with $u_0 \in L^2(e^{\lambda \cdot x},\RN)$ and let $\mathcal{H}(t):=\int_{\RN}e^{\lambda \cdot x}u^2(t,x)\,dx$, which is well defined for $|\lambda|\le 2m$ according to \eqref{energy:weighted}. Moreover,
$$
\mathcal{H}(t)=\int_{\RN}|e^{\lambda/2 \cdot x}u(t,x)|^2\,dx=\int_{\RN}\big|\widehat{u}\big(\xi+\frac{i\lambda}{2}\big)\big|^2\,d\xi=\int_{\RN}e^{-2t((\xi +\frac{i\lambda}{2}) ^2+m^{2})^s} \big|\widehat{u_0}\big(\xi+\frac{i\lambda}{2}\big)\big|^2\,d\xi.
$$
\begin{theorem}
Let $0<s<1$. The functional $\mathcal{H}$ is logarithmically convex. In particular
$$\mathcal{H}(t) \le \mathcal{H}(0)^{1-t} \mathcal{H}(1)^t, \quad  t\in [0,1].$$
\end{theorem}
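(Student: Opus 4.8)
The plan is to show that $\log \mathcal H(t)$ is convex on $[0,1]$ by differentiating twice and recognizing the result as a Cauchy--Schwarz inequality. The key observation is the representation already derived in the excerpt,
$$
\mathcal H(t)=\int_{\RN} e^{-2t\Phi(\xi)}\,\big|\widehat{u_0}(\xi+\tfrac{i\lambda}{2})\big|^2\,d\xi,\qquad \Phi(\xi):=\big((\xi+\tfrac{i\lambda}{2})^2+m^2\big)^s,
$$
so that, writing $d\nu(\xi)=|\widehat{u_0}(\xi+\tfrac{i\lambda}{2})|^2\,d\xi$, the functional is an exponential moment
$$
\mathcal H(t)=\int_{\RN} e^{-2t\Phi(\xi)}\,d\nu(\xi).
$$
I would first note that for $|\lambda|\le 2m$ the quantity $\Phi(\xi)$ is real and nonnegative: indeed $(\xi+\tfrac{i\lambda}{2})^2+m^2=|\xi|^2+m^2-\tfrac{|\lambda|^2}{4}+i\,\xi\cdot\lambda$, and when $\xi\cdot\lambda=0$ (after diagonalizing, the relevant component) the real part is $\ge m^2-\tfrac{|\lambda|^2}{4}\ge 0$; more carefully, the analytic branch of $w^s$ used here is real on the relevant contour, a point already exploited in Proposition \ref{prop:analy} and in the proof of \eqref{energy:weighted}. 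Thus $e^{-2t\Phi(\xi)}\in(0,1]$ and $\mathcal H(t)$ is finite by \eqref{energy:weighted}.

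Next I would differentiate under the integral sign (justified by dominated convergence, using $\Phi\ge 0$ and polynomial growth of $\Phi$ against the rapidly decaying measure $d\nu$, since $u_0\in L^2(e^{\lambda\cdot x}dx)$ makes $\widehat{u_0}(\cdot+\tfrac{i\lambda}{2})$ an $L^2$ function and in fact the Gaussian-type decay of $K_t^s$ gives extra room for $t>0$):
$$
\mathcal H'(t)=-2\int_{\RN}\Phi(\xi)\,e^{-2t\Phi(\xi)}\,d\nu(\xi),\qquad
\mathcal H''(t)=4\int_{\RN}\Phi(\xi)^2\,e^{-2t\Phi(\xi)}\,d\nu(\xi).
$$
Then
$$
\big(\log\mathcal H\big)''(t)=\frac{\mathcal H''(t)\,\mathcal H(t)-\mathcal H'(t)^2}{\mathcal H(t)^2},
$$
and the numerator is nonnegative: with $d\mu_t:=e^{-2t\Phi}\,d\nu$ it equals
$$
4\left(\int \Phi^2\,d\mu_t\right)\left(\int d\mu_t\right)-4\left(\int \Phi\,d\mu_t\right)^2\ \ge\ 0
$$
by the Cauchy--Schwarz inequality applied to $\Phi\in L^2(d\mu_t)$ and $1\in L^2(d\mu_t)$. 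Hence $\log\mathcal H$ is convex on $[0,1]$ (a limiting argument handles the endpoint $t=0$, where $\mathcal H(0)=\|e^{\lambda/2\cdot x}u_0\|_{L^2}^2<\infty$ by hypothesis; alternatively one restricts to $[\varepsilon,1]$ and lets $\varepsilon\to 0$). Convexity of $\log\mathcal H$ on $[0,1]$ gives $\log\mathcal H(t)\le(1-t)\log\mathcal H(0)+t\log\mathcal H(1)$, i.e. $\mathcal H(t)\le\mathcal H(0)^{1-t}\mathcal H(1)^t$, as claimed.

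The main obstacle is the rigorous justification of differentiation under the integral sign together with the reality and sign of $\Phi(\xi)$ on the shifted contour; once those are in hand, the log-convexity is the classical Hölder/Cauchy--Schwarz argument for exponential moments (equivalently, $t\mapsto\log\int e^{-2t\Phi}d\nu$ is convex because it is the cumulant generating function of a positive measure). A clean alternative to the explicit second-derivative computation is to invoke Hölder directly: for $t=(1-\theta)t_0+\theta t_1$,
$$
\mathcal H(t)=\int e^{-2((1-\theta)t_0+\theta t_1)\Phi}\,d\nu
\le\left(\int e^{-2t_0\Phi}\,d\nu\right)^{1-\theta}\left(\int e^{-2t_1\Phi}\,d\nu\right)^{\theta}
=\mathcal H(t_0)^{1-\theta}\mathcal H(t_1)^{\theta},
$$
which sidesteps differentiability entirely and only uses $e^{-2t\Phi}\ge 0$; I would likely present this version and remark that it is exactly log-convexity.
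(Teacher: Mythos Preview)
Your approach is essentially identical to the paper's: both use the Fourier-side representation $\mathcal H(t)=\int e^{-2t\Phi(\xi)}\,d\nu(\xi)$ with $\Phi(\xi)=\big((\xi+\tfrac{i\lambda}{2})^2+m^2\big)^s$ and $d\nu=|\widehat{u_0}(\xi+\tfrac{i\lambda}{2})|^2\,d\xi$, compute $(\log\mathcal H)''$, and conclude via Cauchy--Schwarz. Your alternative Hölder argument is a clean variant the paper does not mention.

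One technical point that both you and the paper gloss over: $\Phi(\xi)$ is \emph{not} real in general, since $(\xi+\tfrac{i\lambda}{2})^2+m^2=|\xi|^2+m^2-\tfrac{|\lambda|^2}{4}+i\,\xi\cdot\lambda$ has nonzero imaginary part off the hyperplane $\xi\cdot\lambda=0$. What actually enters is $|e^{-t\Phi}|^2=e^{-2t\Re\Phi}$, so the correct identity is $\mathcal H(t)=\int e^{-2t\,\Re\Phi(\xi)}\,d\nu(\xi)$, and the Cauchy--Schwarz (or Hölder) step goes through verbatim with $\Re\Phi$ in place of $\Phi$. Your remark that ``the analytic branch of $w^s$ used here is real on the relevant contour'' is not correct as stated, but the fix is immediate and does not affect the structure of the argument.
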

\begin{proof}
Starting from \eqref{convolution}, we use the Fourier representation of the solution $u$. Then, the functional $\mathcal{H}(t)$ can be written as follows
\begin{align*}
\mathcal{H}(t)&=\| e^{\lambda/2 \cdot (\cdot)} (K^s_t \ast u_0) (\cdot)  \|^2_{L^2(\RN)} =\Big \| e^{-t((\cdot +\frac{i\lambda}{2}) ^2+m^{2})^s} \widehat{u_0}\Big(\cdot+\frac{i\lambda}{2}\Big) \Big \|^2_{L^2(\RN)}.
\end{align*}
Since $$(\log(\mathcal{H}))''=\frac{\ddot{\mathcal{H}}(t)  \mathcal{H}(t)-\dot{\mathcal{H}}(t)^2 }{(\mathcal{H}(t))^2}$$
we only need to check that the numerator is positive. Indeed, by using $\widehat{\frac{d}{dt}K^s_t}(\xi)=(\xi\cdot \xi+m^{2})^s e^{-t\, (\xi\cdot \xi+m^{2})^s}$, we have
\begin{align*}
(\dot{\mathcal{H}}(t))^2&= \Big\|\Big(\big(\cdot +\frac{i\lambda}{2}\big) ^2+m^{2}\Big)^s e^{-t\, ((\cdot +\frac{i\lambda}{2}) ^2+m^{2})^s} \widehat{u_0}\Big(\cdot+\frac{i\lambda}{2}\Big)  \Big\|^4_{L^2(\RN)}\\
&\le\Big\|\Big(\big(\cdot +\frac{i\lambda}{2}\big) ^2+m^{2}\Big)^{2s} e^{-t\, ((\cdot +\frac{i\lambda}{2}) ^2+m^{2})^s} \widehat{u_0}\Big(\cdot+\frac{i\lambda}{2}\Big) \Big \|^2_{L^2(\RN)} \Big\|e^{-t\, ((\cdot+\frac{i\lambda}{2}) ^2+m^{2})^s} \widehat{u_0}\Big(\cdot+\frac{i\lambda}{2}\Big) \Big \|^2_{L
^2(\RN)}\\
&=\ddot{\mathcal{H}}(t)  \mathcal{H}(t)
\end{align*}
and we conclude the proof.
\end{proof}

\begin{remark}
The logarithmic convexity is a strong tool that might lead to an uncertainty principle result for the corresponding equation, like in \cite{EKPV-JEMS}. However, the method developed in \cite{EKPV-JEMS} in order to prove uncertainty principles does not work here: the reason is that the decay of $u_0$ given by $\mathcal{H}(0)<\infty$ does not lead to better decay for $u(t,x)$, $t>0$.  This can be immediately seen from the definition $\mathcal{H}(t):= \int_{\R^N} e^{\lambda \cdot x} u^2(t,x) \,dx$, where the space decay is always the same ($\lambda$ is constant).
 In particular, if we look at the fundamental solution, it always has the same spatial decay $e^{-m|x|}$, thus it does not improve with time. This is  in big contrast to what happens with self-similar processes (for instance, as in \cite{EKPV-JEMS} dealing with the classical heat equation).
\end{remark}

\section{The heat equation with potential}\label{Sect:HeatPotential}

We devote this section to the study of (any sign) solutions to Problem \eqref{eq:proRel}. We assume that $V\in L^\infty([0,\infty)\times \RN)$.
\begin{defn}
Let $u_0\in \mathbb{H}^{2s} (\RN)$ and $T>0$ or $T=\infty$.
A \emph{mild solution} of Problem \eqref{eq:proRel} is a function $u\in C([0,T]:\mathbb{H}^{2s}(\RN) )$ which satisfies, for a.e. $(t,x)\in [0,T]\times \RN$,
\begin{equation*}
  u(t,x)=(K_t^{s} \ast u_0)(x)  + \int_0^t \big(K_{t-\tau}^{s} \ast  (V(\tau,\cdot) u(\tau,\cdot))\big)(x) d\tau.
\end{equation*}
\end{defn}
The diffusion operator $L^s_m: \mathbb{H}^{2s}_m(\RN) \to L^2(\RN)$ is a bounded linear operator. The right hand side $F: [0,\infty) \times \mathbb{H}^{2s}_m(\RN) \to \mathbb{H}^{2s}_m(\RN)$ with $F(t,x):=V(t,x) u(t,x)$ is Lipschitz for $V\in L^\infty([0,T]\times \RN).$
The hypothesis of  \cite[Thm. 1.2, Ch. 6.1]{P} is satisfied, thus there exists a mild solution to the initial value Problem \eqref{eq:proRel} for some $T>0$. Moreover, given the assumption on the potential, $u$ is in fact a strong solution: $u_t \in L^1([0,T]:\mathbb{H}^{2s}_m(\RN))   $, see \cite[Thm. 1.6, Ch. 6.1]{P}.

Since we will work with exponentially decaying solutions, we  prove here that, given $u_0 \in \mathbb{H}^{2s}_m (\RN) \cap L^{2}(e^{\lambda \cdot x} \,dx)$, then, the corresponding strong solution $u$ to Problem \eqref{eq:proRel} satisfies $u(t,\cdot) \in  L^{2}(e^{\lambda \cdot x} \,dx)$ for all $t>0.$ 

\begin{proposition}\label{Prop:weightedL2potential}  Let $u_0 \in \mathbb{H}^{2s}_m (\RN) \cap L^{2}(e^{\lambda \cdot x} \,dx)$. Then, the corresponding strong solution $u$ to Problem~\eqref{eq:proRel} satisfies $u(t,\cdot) \in  L^{2}(e^{\lambda \cdot x} \,dx)$ for all $t \in  [0,T].$ 
\end{proposition}
\begin{proof}
 The $L^2$ energy estimate can be proved for $0\le t   \le T$, integrating by parts:
$$\|u(t,\cdot)\|_{L^2(\RN)} \le e^{t \|V \|_{L^\infty([0,T]\times \RN)} } \|u_0\|_{L^2(\RN)}.
$$

Using \eqref{energy:weighted}, we have
\begin{align*}
& \| e^{\lambda/2 \cdot x}u(t,x)\|_{L^2(\RN)}
\le   \Big\| e^{\lambda/2 \cdot x}  ( K_t^s\ast u_0 ) \Big\|_{L^2(\RN)} +   \Big\|  e^{\lambda/2 \cdot x} \, \int_0^t K_{t-\tau}^{s} \ast (V(\tau,\cdot) u(\tau,\cdot)) d\tau \Big\|_{L^2(\RN)}  \\
 &\le e^{- \frac{(m^2-|\lambda|^2/4)^s}{2} \,  t }   \| e^{\lambda/2 \cdot (\cdot)}u_0\|_{L^2(\RN)} \\
 &+ \Big( \int_{\RN}  \Big(\int_0^t (e^{\lambda/2 \cdot (\cdot)}K_{t-\tau}^{s}(\cdot)) \ast  \big(V(\tau,\cdot) u(\tau,\cdot)\big)(x) \,   d\tau  \Big)^2  \,dx \Big)^{1/2}\\
&\le e^{- \frac{(m^2-|\lambda|^2/4)^s}{2} \,  t }   \| e^{\lambda/2 \cdot (\cdot)}u_0\|_{L^2(\RN)} + \| V\|_{L^\infty} \int_0^t \|e^{\lambda \cdot (\cdot)} K_{t-\tau}^s(\cdot)\|_{L^1(\RN)} \|u(\tau,\cdot)\|_{L^2(\RN)} \,d\tau\\
&\le e^{- \frac{(m^2-|\lambda|^2/4)^s}{2} \,  t }   \| e^{\lambda/2 \cdot (\cdot)}u_0\|_{L^2(\RN)} + \| V\|_{L^\infty} \sup_{t\in [0,T]}\|  u(t,\cdot) \|_{L^2(\RN)}^2 \int_0^t \|e^{\lambda \cdot (\cdot)} K_{t-\tau}^s(\cdot)\|_{L^1(\RN)}  \,d\tau \\
&= e^{- \frac{(m^2-|\lambda|^2/4)^s}{2} \,  t }   \| e^{\lambda/2 \cdot (\cdot)}u_0\|_{L^2(\RN)} +\| V\|_{L^\infty} \sup_{t\in [0,T]}\|  u(t,\cdot) \|_{L^2(\RN)}^2 \int_0^t e^{-(m^2-|\lambda|^2)^s \,(t-\tau)}  \,d\tau < +\infty.
 \end{align*}
The result follows.
\end{proof}

\begin{remark}
Observe that, in the proof of Proposition \ref{Prop:weightedL2potential} it is enough to assume only $u_0\in L^2(\RN)$. Nevertheless, since we will need  $u_0\in L^{2}(e^{\lambda \cdot x} \,dx)$ later, we have decided to state the more restrictive condition in the proposition. 
\end{remark}

\subsection{Backward unique continuation for the heat equation with potential}

\begin{theorem}[Backward unique continuation]
Let $N\ge 1$, $s\in (0,1)$ and $m\ge 0$.  Let $u$ be a solution to Problem \eqref{eq:proRel} with initial data $u_0 \in L^2( \RN)$ and $V\in L^\infty( 0,T \times \RN)$. Assume that $u(T,\cdot)=0$ for some time $T>0$.
Then $u\equiv 0$.
\end{theorem}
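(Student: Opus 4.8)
The plan is to run a log-convexity / uncertainty-principle argument in the spirit of Escauriaza--Kenig--Ponce--Vega, using the $L^2$-based log-convexity already available in the paper as the essential ingredient. Fix the solution $u$ of Problem \eqref{eq:proRel} with $u(T,\cdot)=0$. First I would reduce to the interval $[0,T]$ by rescaling (or simply work directly on $[0,T]$), and introduce the Appell-type quantity $\mathcal{G}(t):=\int_{\RN} u^2(t,x)\,dx = \|u(t,\cdot)\|_{L^2}^2$. The point is to show that $\log \mathcal{G}$ is essentially convex up to an additive correction controlled by $\|V\|_{L^\infty}$: differentiating twice and using the equation $u_t = -L^s_m u + Vu$ together with the self-adjointness and positivity of $L^s_m$, one gets $\dot{\mathcal{G}} = -2\int u L^s_m u + 2\int V u^2$ and a corresponding expression for $\ddot{\mathcal{G}}$, so that the Cauchy--Schwarz inequality $\bigl(\int u\,L^s_m u\bigr)^2 \le \int u^2 \int (L^s_m u)^2$ — equivalently, the log-convexity computation of Section~\ref{subseclog} with $\lambda=0$ — produces $\ddot{\mathcal{G}}\mathcal{G} - \dot{\mathcal{G}}^2 \ge -C(\|V\|_\infty)\,\mathcal{G}^2$, i.e. $\bigl(\log(e^{Ct}\mathcal{G})\bigr)''\ge 0$ for a suitable $C$.

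Next I would exploit this almost-log-convexity: for $0<t<T$,
\[
\mathcal{G}(t) \;\le\; e^{C(t)}\,\mathcal{G}(0)^{1-t/T}\,\mathcal{G}(T)^{t/T},
\]
with $C(t)$ bounded on $[0,T]$. Since $u(T,\cdot)=0$ we have $\mathcal{G}(T)=0$, and hence $\mathcal{G}(t)=0$ for every $t\in(0,T)$, provided $\mathcal{G}(0)<\infty$, which holds because $u_0\in L^2(\RN)$. This forces $u(t,\cdot)\equiv 0$ for all $t\in(0,T)$; by continuity $u(t,\cdot)=0$ up to $t=T$, and then one runs the mild-solution (Duhamel) representation forward from any interior time, or simply invokes forward uniqueness, to conclude $u\equiv 0$ on $(0,\infty)\times\RN$.

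The technical care needed is in the rigorous justification of the formal differentiations of $\mathcal{G}$: one should regularize (e.g. mollify in $x$, or work with $\mathcal{G}_\varepsilon(t)=\int u^2 e^{-\varepsilon|x|^2}$ or use the Galerkin/spectral truncation already implicit in the functional-calculus approach of the paper) so that all the integrations by parts and the use of $L^s_m u \in L^2$ are licit, then pass to the limit; the hypothesis $u\in C([0,T]:L^2)\cap C^1((0,T):L^2)$ from the existence discussion above is exactly what makes $\mathcal{G}\in C^1$ and the interior computation valid. I expect \textbf{this regularization and the precise form of the differential inequality} to be the main obstacle — in particular controlling the term $\int (L^s_m u)^2$, which requires $u(t,\cdot)$ to lie in $\operatorname{Dom}(L^s_m)$ for $t>0$ (true by parabolic smoothing from the semigroup $e^{-tL^s_m}$, even with a bounded potential, via Duhamel and the boundedness of $L^s_m e^{-tL^s_m}$ on $L^2$ with norm $O(1/t)$). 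Once the differential inequality $\bigl(\log(e^{Ct}\mathcal{G})\bigr)''\ge 0$ is in hand, the conclusion is immediate. Note that here, unlike the weighted statements in Theorem~\ref{th11}, no restriction on $\lambda$ or on $s\in(0,1/2]$ is needed and $m\ge 0$ is allowed, since we only use the unweighted $L^2$ log-convexity, which is nothing but the abstract statement that $\|e^{-tL}u_0\|_{L^2}$ is log-convex for any nonnegative self-adjoint $L$, perturbed by a bounded potential.
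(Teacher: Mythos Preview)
Your proposal is correct and follows essentially the same route as the paper: both arguments use the unweighted $L^2$ functional $\mathcal{H}(t)=\|u(t,\cdot)\|_{L^2}^2$, establish its (approximate) log-convexity via the Cauchy--Schwarz inequality $\langle u,L^s_m u\rangle^2\le \|u\|_{L^2}^2\|L^s_m u\|_{L^2}^2$ together with self-adjointness of $L^s_m$, and then absorb the bounded potential as a lower-order perturbation to get the interpolation bound $\mathcal{H}(t)\le C\,\mathcal{H}(0)^{1-t/T}\mathcal{H}(T)^{t/T}$. The paper simply invokes \cite[Lemma~2]{EKPV-JEMS} for the perturbed log-convexity step, whereas you sketch that computation directly; the conclusion from $\mathcal{H}(T)=0$ is identical.
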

\begin{proof}
Let $\mathcal{H}(t):=\int_{\RN}u^2(t,x)\,dx$.
In case the potential is $V=0$  we have
$$
\dot{\mathcal{H}}(t)=\langle u, u_t\rangle + \langle u_t,u\rangle  = 2 \Re \langle u, L^s_m u\rangle  = 2 \|L^{s/2}_mu\|_{L^2(\RN)}^2
$$
and
$$
\ddot{\mathcal{H}}(t) = 2 \Re \langle u, u_{tt}\rangle  + 2\langle u_t,u_t\rangle = 4 \|L^s_m u\|_{L^2(\RN)}^2. $$
Then
$ ( \dot{\mathcal{H}}(t))^2 \le 4 \langle u,u\rangle  \cdot \langle L^s_m u,L^s_m u\rangle $ and thus $\mathcal{H}(t)$ is logarithmically convex. Hence
$$
\mathcal{H}(t)\le \mathcal{H}(0)^{\theta} \mathcal{H}(T)^{1-\theta}, \quad \theta\in [0,1].
$$
When the potential is non trivial, the functional $\mathcal{H}$ is still logarithmically convex, since the operator $L^s_m$ is symmetric. According to  \cite[Lemma 2, p. 6]{EKPV-JEMS}  there exists a constant $N$
such that
$$
\mathcal{H}(t)\le e^{N( \|V\|_\infty +\|V\|_\infty^2) } \mathcal{H}(0)^{t} \mathcal{H}(1)^{1-t}, \quad  t\in [0,1].
$$ Thus, up to scaling in time, if $u(T)\equiv 0$ then $u\equiv 0$.
\end{proof}


\section{Carleman inequality for the parabolic operator with linear exponential weight}\label{Sect:Carleman1}


This section is devoted to the study of convexity estimates for an exponential weighted norm of solutions $u:(0,T]\times \R^N\to \R$  to the initial value problem
\begin{equation}\label{Eq:withF}
\begin{cases}
u_t(t,x)+(-\Delta + m^2)^s u(t,x)=F(t,x) &\quad x\in \RN, \,\,\, t>0, \\
u(0,x)=u_0(x), &\quad x\in \RN
\end{cases}
\end{equation}
for some $T>0$. In particular, when $F=Vu$ we are reduced to the Problem \eqref{eq:proRel}. We consider the functional
\begin{equation}
\label{eq:Ht}
\mathcal{H}(t):=\int_{\RN} \omega(t,x) u^2(t,x) \,dx, \quad \omega(t,x)=e^{At+\lambda \cdot x},  \quad  t\in [0,T],
\end{equation}
for $A\in \R$, $\lambda\in \R^N$.
Note that, by Proposition  \ref{Prop:weightedL2potential}, we have $\mathcal{H}(t)<\infty$ for $|\lambda|\le 2m$.

\subsection{Persistence of the spatial decay: monotonicity of the energy functional}


In what follows we prove that if the initial data decays at least exponentially fast in space, then the solution $u(t,x)$ will have a similar decay at every positive time $t>0$.
This will be a consequence of the monotonicity of the functional~$\mathcal{H}(t)$.

\begin{proposition}
\label{prop:monot}
Let $0<s<1$, $m>0$ and let $u_0 \in   \mathbb{H}^{2s}(\RN) \cap   L^2(e^{\lambda\cdot x}\, dx)$ for some $\lambda \in \RN$ with $|\lambda|\le m$. Assume that $F(t,x) \in L^2(0,T:L^2(e^{\lambda\cdot x}\, dx))$. Let $u$ be a  strong   solution to the initial value problem
\eqref{Eq:withF}.  
 Let $\omega(t,x)$ be as in~\eqref{eq:Ht}. Then, for all $t \in [0,T]$,
\begin{multline}\label{persistence} \int_{\RN} \omega(t,x) u^2(t,x) dx +  \int_0^t  \int_{\RN} \omega(t,x)  (-H^s_m(u,u)) \,dx \,dt  \\ \nonumber
\le e^{(A - ( -|\lambda|^2 +m^ 2 )^ s)t} \int_{\RN} \omega(t,x) u_0^2(x) \,dx   + e^{(A - ( -|\lambda|^2 +m^ 2 )^ s)t} \int_0 ^t \int_{\RN} \omega(t,x)  F^2(t,x) \,dx\,dt.  \end{multline}
\end{proposition}
\begin{proof}
Let $\mathcal{H}(t)$ be defined as in \eqref{eq:Ht}.
We have that
\begin{align*}
\dot{\mathcal{H}}(t) &=   \int_{\RN} (\omega_t- L^s_m  (\omega)) u^2 \,dx   + \int_{\RN} \omega H^s_m(u,u)\, dx +  2 \int_{\RN}  \omega u F \,dx \\
&= [A - ( -|\lambda|^2 +m^ 2 )^ s]  \int_{\RN}   \omega  u^2 \,dx   + \int_{\RN} \omega H^s_m(u,u)\, dx +  2 \int_{\RN} \omega u F\, dx.
\end{align*}
Let $a:= [A-  ( -|\lambda|^2 +m^ 2 )^ s ]$. For sufficiently negative $A$, the coefficient $a$ will be negative and thus we could ignore the term $a \int_{\RN}   \omega  u^2 \,dx $.
However, we keep this term to avoid imposing conditions on the parameter $A$ in this proposition. Thus we have
$$  \dot{\mathcal{H}}(t) - a \mathcal{H}(t)\le \int_{\RN}  \omega  H^s_m(u,u) \,dx+ \int_{\RN}  \omega  F^2 \,dx, $$
that can be rewritten as
$$\frac{d}{dt}\big( e^{-at} \mathcal{H}(t)\big) \le e^{-at} \int_{\RN}  \omega  H^s_m(u,u) \,dx+ e^{-at} \int_{\RN}  \omega  F^2 \,dx. $$
We integrate from $t_1$ to $t_2$ in time and therefore
$$ \mathcal{H}(t_2) +  e^{at_2}\int_{t_1}^{t_2} e^{-a\tau} \int_{\RN} \omega  (-H^s_m(u,u))\, dx \,dt \le e^{a(t_2- t_1)}\mathcal{H}(t_1)  +e^{at_2} \int_{t_1} ^{t_2}e^{-a\tau}
\int_{\RN} \omega  F^2 \,dx\,dt.$$
This implies that, for all $0\le t_1 <t_2\le 1$,
\begin{equation}\label{persistence:t1t2}
 \mathcal{H}(t_2) +  \int_{t_1}^{t_2} \int_{\RN} \omega  (-H^s_m(u,u)) \,dx\, dt \le e^{at_2}\mathcal{H}(t_1)  +e^{at_2} \int_{t_1} ^{t_2}  \int_{\RN} \omega  F^2 \,dx\,dt.
\end{equation}
The conclusion follows just by taking $t_1=0$ and $t_2=t$.
\end{proof}
\begin{remark}
In particular, if we take $t_2=1$ in \eqref{persistence:t1t2} and renaming $t_1$ into $t$, we obtain that, for all $t \in(0,1)$,
\begin{equation}
\label{persistence:t:1}
\mathcal{H}(1)+\int_{t}^{1} \int_{\RN} \omega  (-H^s_m(u,u)) \,dx\, dt \le e^{a}\mathcal{H}(t)  +e^{a} \int_{t} ^{1}  \int_{\RN} \omega  F^2 \,dx\,dt.
\end{equation}
\end{remark}

\subsection{Convexity arguments}

In this subsection we will consider a similar weight as in Subsection \ref{subseclog}, but with a correction in time needed in order to absorb the effects of the potential. We will prove a convexity result related to it, that will be the key point to get
a Carleman inequality for $\partial_t+L_m^s$ in Subsection \ref{subsec:Carelx}. Note that the proof carried out in Subsection \ref{subseclog} is not valid anymore due to the presence of the potential.

Let  $\omega(t,x)$ be defined as in \eqref{eq:Ht}, where $A$ is a constant to be chosen later.
Let
\begin{equation}
\label{eq:D}
D(t):=\int_{\RN}  \omega_t u^2 \, dx - 2\int_{\RN}  \omega uL^s_m u\, dx =\int_{\RN}  (\omega_t  - L^s_m \omega)u^2\,dx+\int_{\RN}\omega H_m^s(u,u)\,dx
\end{equation}
where the second equality follows easily from \eqref{eq:CC}. Actually,  \eqref{eq:D} is a formal definition, for any $\omega$. Recall also the definition of $\mathcal{H}(t)$ in  \eqref{eq:Ht}.
 We will prove the following.
\begin{proposition}
\label{prop:lower}
 Let $N\ge 1$, $s\in (0,1/2)$, $m>0$ and let $u_0 \in   \mathbb{H}^{2s}(\RN) \cap   L^2(e^{\lambda\cdot x}\, dx)$ for some $\lambda \in \RN$ with $|\lambda|\le m$. Assume that $F(t,x) \in L^2(0,T:L^2(e^{\lambda\cdot x}\, dx))$. Let $u$ be a  strong   solution to the initial value problem
\eqref{Eq:withF}  such that $u^2\in \operatorname{Dom}(L_m^{2s})$.  Let  $\omega(t,x)$ be as in \eqref{eq:Ht}.
For $A+m^{2s}\le0$ sufficiently small (that is, $|A|$ sufficiently large) satisfying
\begin{equation}\label{condA:2}
\frac14\big((-|\lambda|^2+m^2)^s-A\big)^2 \ge C_2(N,s) m^{4s},
\end{equation}
where $C_2(N,s)$ is a positive constant depending only on $N$ and $s$, we obtain the following lower bound
\begin{align}\label{Ddot:lowerbound}
\notag\dot{D}(t) &\ge
\frac34\big((-|\lambda|^2+m^2)^s-A\big)^2\mathcal{H}(t)  -C_1(N,s)\int_{\RN} \omega F^2  \,dx   + 2  \int_{\RN} \omega ( u_t)^2  \,dx \\
& \quad + (A+m^{2s}) \int_{\RN} H^s_m(u,u) \omega \, dx-\int_{\RN}\omega H^{2s}_m(u,u) \,dx.
\end{align}
where $C_1(N,s)$ is a positive constant depending only on $N$ and $s$.
\end{proposition}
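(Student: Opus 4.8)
The plan is to differentiate $D(t)$ once more in time, using the equation $u_t = -L_m^s u + F$ to replace $u_t$ wherever it appears, and then to extract from the resulting expression a ``squared'' positive term that can be bounded below by $\mathcal{H}(t)$. Write $D(t) = \int \omega_t u^2 - 2\int \omega u L_m^s u$ and compute $\dot D$ by the product rule: we get contributions $\int \omega_{tt} u^2$, $2\int \omega_t u u_t$, $-2\int \omega_t u L_m^s u$, $-2\int \omega u_t L_m^s u$, and $-2\int \omega u L_m^s u_t$. Since $\omega(t,x) = e^{At}e^{\lambda\cdot x}$ we have $\omega_t = A\omega$ and $\omega_{tt} = A^2\omega$, and by Proposition~\ref{prop:analy} the weight satisfies $L_m^s(e^{\lambda\cdot x}) = (-|\lambda|^2+m^2)^s e^{\lambda\cdot x}$, so morally $L_m^s$ acting against $\omega$ produces the scalar $\mu := (-|\lambda|^2+m^2)^s$. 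The philosophy (as in Escauriaza--Kenig--Ponce--Vega) is that after this substitution $\dot D(t)$ should be, up to potential terms and the carr\'e-du-champ remainders, of the form $2\|\text{(something)}\|^2$, and one then uses $\int \omega (u_t + \mu u)^2 \ge 0$ together with $D(t)$ itself to produce the factor $(\mu - A)^2 \mathcal{H}(t)$.

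Concretely, the key algebraic step I would carry out is the following. Using $u_t = -L_m^s u + F$ and the identity \eqref{eq:CC} (i.e. $L_m^s(u^2) = 2uL_m^s u + H_m^s(u,u)$, hence $L_m^{2s}(u^2) = L_m^s(2uL_m^s u) + L_m^s(H_m^s(u,u))$, and pairing $\int \omega u L_m^s u_t$ against the weight via self-adjointness of $L_m^s$), collect all terms into three groups: (a) a quadratic-in-$u$ group with constant coefficient that assembles into a multiple of $\int \omega u^2 = \mathcal{H}(t)$ with coefficient $(\mu - A)^2$ (this is where the square $((-|\lambda|^2+m^2)^s - A)^2$ comes from — it is literally the square of the eigenvalue shift minus the time-drift coefficient); (b) the genuinely positive term $2\int \omega u_t^2$, which survives because $u_t$ is not slaved to the weight; and (c) the ``error'' terms $-\int \omega H_m^{2s}(u,u)$, $(A+m^{2s})\int \omega H_m^s(u,u)$, together with the $F$-dependent cross terms. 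The $F$-terms are controlled by Cauchy--Schwarz: $2\int \omega u F \le \varepsilon\,\mu^2 \int \omega u^2 + \varepsilon^{-1}\mu^{-2}\int \omega F^2$ and similarly for $\int \omega F L_m^s u$, absorbing an $\varepsilon$-fraction of the $(\mu-A)^2\mathcal{H}(t)$ term and leaving $-C_1(N,s)\int \omega F^2$ on the right. This is why the stated bound has coefficient $\tfrac34$ rather than $1$ in front of $((-|\lambda|^2+m^2)^s-A)^2\mathcal{H}(t)$: a quarter has been spent absorbing the potential.

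The role of the hypothesis \eqref{condA:2}, $\tfrac14(\mu - A)^2 \ge C_2(N,s)m^{4s}$, is to control the cross-terms that are linear in the carr\'e-du-champ remainder $H_m^s(u,u)$ but not of a definite sign: one has pointwise bounds $|H_m^s(f,f)| \lesssim m^{2s}|f|^2 + (\text{nonlocal part})$ and $|L_m^s u|$-type quantities paired against $H_m^s$ produce terms of size $\lesssim m^{2s}$ times $\mathcal{H}(t)$ or times $\int \omega u_t^2$, which must be dominated by the main $(\mu-A)^2\mathcal{H}(t)$ term and by the $2\int\omega u_t^2$ term respectively — hence the smallness requirement $A + m^{2s} \le 0$ (so that $(A+m^{2s})\int \omega H_m^s(u,u) \ge 0$ since $H_m^s(u,u)\le 0$ by Proposition~\ref{prop:leibniz}, and this term can simply be kept on the left/lower-bound side) and the size requirement that $|\mu - A|$ be large compared to $m^{2s}$. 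The restriction $s \in (0,1/2]$ enters precisely because we need $H_m^{2s}$ to be defined by the singular-integral formula of Proposition~\ref{prop:leibniz}, which requires $0 < 2s < 1$, so that $-\int \omega H_m^{2s}(u,u) \ge 0$ and this term too can be kept as a favorable contribution to the lower bound rather than estimated away.

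The main obstacle I anticipate is the bookkeeping of the term $-2\int \omega u L_m^s u_t$: substituting $u_t = -L_m^s u + F$ gives $2\int \omega u L_m^{2s}u - 2\int \omega u L_m^s F$, and the first piece must be rewritten using $2uL_m^{2s}u = L_m^{2s}(u^2) - H_m^{2s}(u,u)$ and then the weighted integral $\int \omega L_m^{2s}(u^2)$ handled by moving $L_m^{2s}$ onto $\omega$ — this is legitimate exactly because of the hypothesis $u^2 \in \operatorname{Dom}(L_m^{2s})$, and it produces $\mu^2 \int \omega u^2 = \mu^2\mathcal{H}(t)$. Getting all the constant coefficients to combine correctly into $(\mu - A)^2$ — rather than, say, $\mu^2 + A^2$ or $(\mu+A)^2$ — is the delicate point and requires careful tracking of signs across the $\omega_t$, $\omega_{tt}$, and $L_m^s\omega$ contributions; I would do this by completing the square in the formal identity $\dot D(t) = 2\int \omega(u_t + \mu u)^2 + (\text{remainders})$ and verifying that $\int \omega(u_t+\mu u)^2$, expanded and combined with $D(t)$, yields the claimed coefficient.
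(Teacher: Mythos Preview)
Your overall architecture is right: differentiate $D(t)$, substitute $u_t=-L_m^su+F$, and extract the coefficient $(\mu-A)^2$ (with $\mu=(-|\lambda|^2+m^2)^s$) from $\int(\omega_{tt}-2L_m^s(\omega_t)+L_m^{2s}\omega)u^2$, keep $2\int\omega u_t^2=2\int\omega(L_m^su-F)^2$ as the positive ``squared'' piece, and isolate $-\int\omega H_m^{2s}(u,u)$. That matches the paper exactly. But there is a genuine gap in your handling of the $F$-cross terms, and it is precisely the place where most of the paper's work lives.

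When you deal with $-2\int\omega u\,L_m^s F$ (from $-2\int\omega u L_m^s u_t$), you cannot simply treat it as ``similar to $\int\omega F L_m^s u$'' via Cauchy--Schwarz. Self-adjointness of $L_m^s$ is in \emph{unweighted} $L^2$, so moving $L_m^s$ off $F$ lands it on the product $\omega u$, and the Leibniz rule produces the remainder
\[
-2\int H_m^s(\omega,u)\,F\,dx.
\]
This term appears explicitly in the paper's formula for $\dot D(t)$ and is the heart of the proof. It cannot be absorbed by a naive AM--GM against $\mathcal H(t)$ and $\int\omega F^2$: the paper spends the bulk of the argument estimating it by opening up the singular-integral definition of $H_m^s(\omega,u)$, splitting into $\{|x-y|<1/m\}$ and $\{|x-y|>1/m\}$, and using the asymptotics \eqref{eq:asymp0}--\eqref{eq:asympInf} of the Macdonald function together with Lemma~\ref{lemma:Bessel}. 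The output is
\[
\Big|{-2}\int H_m^s(\omega,u)F\Big|\le -m^{2s}\!\int\omega H_m^s(u,u)+C_1(N,s)\!\int\omega F^2+C_2(N,s)\,m^{4s}\mathcal H(t).
\]
This is where both the $m^{2s}$ that combines with the $2A\int\omega H_m^s(u,u)$ contribution (from $\omega_t=A\omega$) to give the coefficient $(A+m^{2s})$, \emph{and} the term $C_2(N,s)m^{4s}\mathcal H(t)$ that must be absorbed via \eqref{condA:2}, actually come from. Your explanation of \eqref{condA:2} --- as controlling ``cross-terms linear in $H_m^s(u,u)$'' via a pointwise bound $|H_m^s(f,f)|\lesssim m^{2s}|f|^2+\cdots$ --- is not the mechanism; the $m^{4s}$ arises from the tail estimate of the mixed carr\'e-du-champ $H_m^s(\omega,u)$ paired with $F$, not from $H_m^s(u,u)$ itself.
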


\begin{proof}
First of all, direct calculations and Proposition \ref{prop:analy} give
 \begin{equation}
 \label{eq:wt}
 w_t=A \omega, \qquad \omega_{tt}= A^2 \omega, \qquad L^s_m \omega = ( -|\lambda|^2 +m^ 2 )^ s\omega,\qquad  L^s_m (\omega_t)= ( -|\lambda|^2 +m^ 2 )^ s A \omega,
 \end{equation}
 \begin{equation}
 \label{eq:Lmsw}
L_m^{2s}w= ( -|\lambda|^2 +m^ 2 )^{2s}\omega,
\end{equation}
and
$$H^s_m(\omega,\omega) =\big( (-4 |\lambda|^2+m^2 )^s - 2 (- |\lambda|^2+m^2 )^s  \big) \omega^2.$$
Let $\mathcal{H}(t)$ be defined as in \eqref{eq:Ht}.
Then
$$
\dot{\mathcal{H}}(t) = \int_{\RN} \omega_t u^2 \,dx + 2\int_{\RN} \omega u u_t  \,dx=\int_{\RN} (\omega_t - L^s_m \omega) u^2  \,dx   + \int_{\RN} \omega H^s_m(u,u)  \,dx +  2 \int_{\RN} \omega u F  \,dx.
$$
Thus
\begin{equation}
\label{eq:Hpunt}
\dot{\mathcal{H}}(t) = D(t) + 2 \int_{\RN} \omega u F dx.
\end{equation}

We focus on $D(t)$. We have, by using the relation \eqref{eq:CC} and after tedious computations,
\begin{align}\label{Ddot}
\dot{D}(t)&= \int_{\RN} (\omega_{tt} -2 L^s_m(\omega_t) +L^{2s}_m \omega) u^ 2  \,dx + 2 \int_{\RN}\omega_t H^s_m(u,u) \,dx +2\int_{\RN} ( \omega_t  -  L_m^s\omega) \cdot u F  \,dx
\\
\notag & \quad + 2  \int_{\RN}  \omega ( L^s_m u -F)^2 \,dx - 2\int_{\RN} \omega F^2 \,dx     - \int_{\RN} \omega H^{2s}_m(u,u)   \,dx- 2 \int_{\RN}  H^s_m (\omega, u)  F \,dx.
\end{align}
In view of the expression of $\omega$ in \eqref{eq:Ht}, we have that, by \eqref{eq:wt} and \eqref{eq:Lmsw},
\begin{equation}\label{Ddot:H}
 \int_{\RN} (\omega_{tt} -2 L^s_m(\omega_t) +L^{2s}_m \omega ) u^ 2 \,dx = \big((-|\lambda|^2+m^2)^s-A\big)^2\mathcal{H}(t)
\end{equation}
and
\begin{equation}\label{Ddot:Hs}
 2 \int_{\RN}\omega_t H^s_m(u,u)\,dx=2A\int_{\RN} \omega  H^s_m(u,u)\,dx.
\end{equation}
Concerning the term $2\int_{\RN} ( \omega_t  -  L_m^s\omega) \cdot u F  \,dx  $, by \eqref{eq:wt} and by applying arithmetic-geometric inequality (AM-GM inequality), we get
\begin{align*}
\notag 2\Big|\int_{\RN} ( \omega_t  -  L_m^s\omega) u F  \,dx \Big|&=2\Big|\int_{\RN} \big(A-(-|\lambda|^2+m^2)^s\big)\omega u F  \,dx \Big|\\
&\le \frac14\big((-|\lambda|^2+m^2)^s-A\big)^2\mathcal{H}(t)+4\int_{\RN}\omega F^2\,dx.
\end{align*}

Let us see now how to estimate the term $ -2 \int  H^s_m (\omega, u)  F$ in \eqref{Ddot}.
By definition we have that
\begin{align*}
& -2 \int_{\mathbb{R}^N}   H^s_m (\omega, u)  F\, dx \\
&\quad = 2C_{N,s} m^{\frac{N+2s}{2}}e^{At}\int_{\mathbb{R}^N} \int_{\mathbb{R}^N} \frac{\big(e^{\lambda\cdot x}-e^{\lambda\cdot y}\big)\big(u(t,x)-u(t,y)\big)}{|x-y|^{\frac{N+2s}{2}}}
K_{\frac{N+2s}{2}}(m|x-y|)\,dy F\,dx+2m^{2s}\int_{\RN}Fu\omega\,dx \\
& \quad = 2e^{At}(I_{|x-y|<1/m} +  I_{|x-y|>1/m})+2m^{2s}\int_{\RN}Fu\omega\,dx,
\end{align*}
where the integrals $I_{|x-y|<1/m}$ and $I_{|x-y|>1/m}$ are determined by the splitting $ \int_{\mathbb{R}^N} \int_{|x-y|<1/m} $ and $ \int_{\mathbb{R}^N} \int_{|x-y|>1/m} $, respectively.
The integral close to the origin is bounded as follows (we use the asymptotics of Macdonald's function in \eqref{eq:asymp0} that involve constants depending on $s$ that do not blow up)
\begin{align*}
I_{|x-y|<1/m}&\simeq c\, C_{N,s}  \Gamma\Big(\frac{N+2s}{2} \Big)  2^{\frac{N+2s}{2}-1}  m^{\frac{N+2s}{2}} \\
&\quad \times \int_{\mathbb{R}^N} \int_{|x-y|<1/m}  e^{\lambda \cdot x}  \frac{\big(1-e^{\lambda \cdot(y-x)}\big)\big(u(t,x)-u(t,y)\big)}{|x-y|^{\frac{N+2s}{2}}}
(m|x-y|)^{-\frac{N+2s}{2}} \,dy F \,dx\\
&=c\,C_{N,s}  \Gamma\Big(\frac{N+2s}{2} \Big)  2^{\frac{N+2s}{2}-1} m^{\frac{N+2s}{4}} \\
&\quad \times\int_{\mathbb{R}^N}\int_{|x-y|<1/m} e^{\lambda \cdot x} \frac{1-e^{\lambda \cdot(y-x)}}{|x-y|^{\frac{N+2s}{2}}} \frac{\big(u(t,x)-u(t,y)\big)}{|x-y|^{\frac{N+2s}{4}}}
(m|x-y|)^{-\frac{N+2s}{4}} \,dy \, F\, dx.
\end{align*}
Using again the asymptotics for the Macdonald's function, and applying Cauchy-Schwartz in the integral in the variable $y$, we get
\begin{align*}
|I_{|x-y|<1/m}|& \le   c \bigg(\int_{|z|<1/m} \frac{ (1-e^{\lambda \cdot z})^2 }{|z|^{N+2s}} dz\bigg)^{1/2}  \cdot C_{N,s}  \Gamma\Big(\frac{N+2s}{2} \Big)  2^{\frac{N+2s}{2}-1}
m^{\frac{N+2s}{4}} \int_{\mathbb{R}^N} e^{\lambda\cdot x} |F|  \\
&\quad \times \bigg(\int_{|x-y|<1/m}  \frac{\big(u(t,x)-u(t,y)\big)^2}{|x-y|^{\frac{N+2s}{2}}}  K_{\frac{N+2s}{2}}(m|x-y|)
\frac{1}{\Gamma(\frac{N+2s}{2})} 2^{-\frac{N+2s}{2}+1}     \,dy \bigg)^{1/2} \,dx  \\
&\le  c\bigg(\int_{|z|<1/m} \frac{ (1-e^{\lambda \cdot z})^2 }{|z|^{N+2s}} dz\bigg)^{1/2}  \cdot C_{N,s}  \Gamma\Big(\frac{N+2s}{2} \Big)  2^{\frac{N+2s}{2}-1} m^{\frac{N+2s}{4}}
 \\
&\quad \times \frac{1}{(\Gamma(\frac{N+2s}{2}))^{1/2}} 2^{-\frac{N+2s}{4}+1/2}\int_{\mathbb{R}^N} e^{\lambda \cdot x} |F | \Big(-H^{s}_m(u,u)  m^{-\frac{N+2s}{2}}\frac{1}{C_{N,s}}  \Big)^{1/2} \,dx  \\
&=c\bigg(\int_{|z|<1/m} \frac{ (1-e^{\lambda \cdot z})^2 }{|z|^{N+2s}} \,dz\bigg)^{1/2} C_{N,s}  \Gamma\Big(\frac{N+2s}{2} \Big)  2^{\frac{N+2s}{2}-1}
 \\
&\quad \times \frac{1}{(\Gamma(\frac{N+2s}{2}))^{1/2}} 2^{-\frac{N+2s}{4}+1/2}\frac{1}{(C_{N,s})^{1/2}}  \int_{\mathbb{R}^N} e^{\lambda\cdot x} |F |(-H^{s}_m(u,u) )^{1/2} \,dx,
\end{align*}
where we used Proposition \ref{prop:leibniz}.
Let us estimate the quantity
\begin{equation}
\label{quant}
c\frac12\bigg[ \bigg(\int_{|z|<1/m} \frac{ (1-e^{\lambda \cdot z})^2 }{|z|^{N+2s}}\, dz\bigg)^{1/2}  C_{N,s}  \Gamma\Big(\frac{N+2s}{2} \Big)  2^{\frac{N+2s}{2}-1}
\frac{1}{(\Gamma(\frac{N+2s}{2}))^{1/2}} 2^{-\frac{N+2s}{4}+1/2}\frac{1}{(C_{N,s})^{1/2}} \bigg] ^2.
\end{equation}
We will prove that for $|\lambda|<m$, the positive constant \eqref{quant} is bounded from above independently of $\lambda$ and $m$. Indeed, for $|\lambda|<m$, using the mean value
theorem we obtain
$$
 \int_{|z|<1/m} \frac{ (1-e^{\lambda \cdot z})^2 }{|z|^{N+2s}}\,dy \le
\int_{|z|<1/m} \frac{ e^{2\frac{|\lambda|}{m}} |\lambda|^2 |z|^2 }{|z|^{N+2s}} \,dz = e^{2\frac{|\lambda|}{m}} |\lambda|^2 \frac{1}{2 m^{2-2s}}\omega_N \le e^2 \frac{1}{2}\omega_Nm^{2s}.
$$
We take into account that $C_{N,s}$ is given by the formula \eqref{constant}, thus the constant \eqref{quant} above is bounded by
$$
 c\,e^2 \frac{1}{4}\omega_N C_{N,s}  \Gamma\Big(\frac{N+2s}{2} \Big)  2^{\frac{N+2s-2}{2}} = -c\frac{1}{2} e^2 \omega_N\frac{4^s}{\pi^{N/2}}\frac{ \Gamma\big(\frac{N+2s}{2} \big)}{\Gamma(-s)}m^{2s}=:C_1(N,s)m^{2s}.
$$
Now we apply AM-GM in the variable $x$ to obtain
\begin{equation}\label{Iorigen:bound}
|I_{|x-y|<1/m}|\le -\frac{m^{2s}}{2}\int_{\RN}e^{\lambda\cdot x} H^{s}_m(u,u) \,dx  +   C_1(N,s)\int_{\mathbb{R}^N} e^{\lambda\cdot x} F^2\,dx.
\end{equation}

For the integral away from the origin we have
\begin{align*}
I_{|x-y|
>1/m}&=C_{N,s}    m^{\frac{N+2s}{2}} \\
&\quad \times\int_{\mathbb{R}^N} \int_{|x-y|>1/m}  \frac{e^{\lambda \cdot x} u(t,x) -e^{\lambda\cdot x} u(t,y) -e^{\lambda\cdot y}u(t,x) +e^{\lambda\cdot y} u(t,y)}{|x-y|^{\frac{N+2s}{2}}}
K_{\frac{N+2s}{2}}(m|x-y|) \,dy F\, dx\\
& = E_ 1+ E_2 .
\end{align*}
Concerning $E_1$, we get
\begin{align*}
E_1 &= C_{N,s}    m^{\frac{N+2s}{2}} \int_{\mathbb{R}^N} \int_{|x-y|>1/m}  \frac{ ( e^{\lambda\cdot x}-e^{\lambda\cdot y})  u(t,x)  }{|x-y|^{\frac{N+2s}{2}}}
K_{\frac{N+2s}{2}}(m|x-y|) \,dy  F \,dx \\
&= C_{N,s}    m^{\frac{N+2s}{2}}  \int_{|z|>1/m}  \frac{1-e^{\lambda \cdot z}}{|z|^{\frac{N+2s}{2}}}  K_{\frac{N+2s}{2}}(m|z|) \,dz \cdot  \int_{\mathbb{R}^N} e^{\lambda\cdot x}
u(t,x) F\, dx.
\end{align*}
Observe that
$$
\int_{|z|>1/m}  \frac{1-e^{\lambda \cdot z}}{|z|^{\frac{N+2s}{2}}} K_{\frac{N+2s}{2}}(m|z|) \,dz = m^{\frac{N+2s}{2}-N}\int_{|y|>1}  \frac{1-e^{\frac{\lambda}{m} \cdot
z}}{|y|^{\frac{N+2s}{2}}} K_{\frac{N+2s}{2}}(|y|) \,dy.
$$
This integral is finite as proved in Lemma \ref{lemma:Bessel}. Indeed,
\begin{align*}
\bigg|\int_{|z|>1/m}  \frac{1-e^{\lambda \cdot z}}{|z|^{\frac{N+2s}{2}}} K_{\frac{N+2s}{2}}(m|z|) \,dz\bigg| \le \bigg|\int_{\RN}  \frac{1-e^{\lambda \cdot z}}{|z|^{\frac{N+2s}{2}}} K_{\frac{N+2s}{2}}(m|z|) \,dz\bigg|&=
m^{\frac{-N+2s}{2}} \Big|\Big (1-\Big(\frac{|\lambda|}{m}\Big)^2\Big)^{s
}-1\Big|\\
&\le 2 m^{\frac{-N+2s}{2}}.
\end{align*}
Thus, applying AM-GM inequality, we get
\begin{equation}\label{E1:bound}
|E_1|\le C_{N,s} m^{2s}2 \int_{\mathbb{R}^N} e^{\lambda\cdot x}| u|   \, |F|\, dx \le C_{N,s}^2 2  m^{4s}\, \int_{\mathbb{R}^N} e^{\lambda\cdot x} u^2\,dx  +\frac{1}{2}\int_{\RN}
e^{\lambda\cdot x}  F^2\, dx.
\end{equation}

For $E_2$ we have
\begin{align*}
|E_2| &\le C_{N,s}    m^{\frac{N+2s}{2}} \int_{\mathbb{R}^N} \int_{|x-y|>1/m}  \frac{ (e^{\lambda\cdot x} -e^{\lambda\cdot y}) |u(t,y)  |}{|x-y|^{\frac{N+2s}{2}}}
K_{\frac{N+2s}{2}}(m|x-y|) \,dy  |F |\,dx \\
&= C_{N,s}    m^{\frac{N+2s}{2}}\int_{\mathbb{R}^N}  \int_{|x-y|>1/m}  e^{\frac{1}{2}\lambda \cdot y} |u(t,y)  |   \frac{  e^{ -\frac{1}{2}\lambda\cdot (x+y)  }  (e^{\lambda\cdot x}-
e^{\lambda \cdot y)} }{|x-y|^{\frac{N+2s}{2}}}  K_{\frac{N+2s}{2}}(m|x-y|) \,dy \, \cdot  e^{\frac{1}{2}\lambda \cdot x} |F |\,dx \\
&\le  C_{N,s}    m^{\frac{N+2s}{2}}\bigg( \int_{\mathbb{R}^N}  e^{\lambda \cdot x} F^2\, dx \bigg)^{1/2}  \\
&\quad  \times \bigg( \int_{\RN}  \bigg[
\int_{|x-y|>1/m}  e^{\frac{1}{2}\lambda \cdot y} |u(t,y)  |    \frac{   e^{\frac{1}{2}\lambda (x-y)}- e^{\frac{1}{2}\lambda\cdot (y-x)}}{|x-y|^{\frac{N+2s}{2}}}
K_{\frac{N+2s}{2}}(m|x-y|) \,dy \bigg]^2 \,dx \bigg)^{1/2} \\
&\le C_{N,s}    m^{\frac{N+2s}{2}} \bigg( \int_{\mathbb{R}^N}  e^{\lambda \cdot x} F^2 \,dx \bigg)^{1/2}  \cdot \bigg( \int_{\RN}  \bigg[
\int_{\RN}  e^{\frac{1}{2}\lambda \cdot y}|u(t,y)  |\cdot  k(x-y)\,dy \bigg]^2\, dx \bigg)^{1/2} \\
&= C_{N,s}    m^{\frac{N+2s}{2}} \bigg( \int_{\mathbb{R}^N}  e^{\lambda \cdot x} F^2\, dx \bigg)^{1/2}  \cdot \| e^{\frac{1}{2}\lambda \cdot  (\cdot)} u\ast  k\|_{L^2(\RN)}  \\
&\le C_{N,s}    m^{\frac{N+2s}{2}} \bigg( \int_{\mathbb{R}^N}  e^{\lambda \cdot x} F^2\, dx \bigg)^{1/2}  \cdot \bigg( \int_{\mathbb{R}^N}  e^{\lambda \cdot x} u^2\,dx
\bigg)^{1/2}  \cdot \|  k(x)\|_{L^1(\RN)},
\end{align*}
where $k(z)=\chi_{\{|z|>{1/m}\}} (|z|) \cdot \frac{   e^{ \frac{1}{2}\lambda\cdot z}- e^{- \frac{1}{2}\lambda\cdot z}}{|z|^{\frac{N+2s}{2}}}  K_{\frac{N+2s}{2}}(m|z|).$
Observe that
if $|\lambda|<m$ then
$$
\|  k(x)\|_{L^1(\RN)} \le  2 \sqrt{\frac{\pi}{2}}           m^{-1/2}\cdot  \int_{\{|x|>1/m\}} \frac{  1  } {|x|^{\frac{N+2s+1}{2}}}   e^{-\frac{m}{2}|x|} \,dx=\sqrt{\pi}   2^{\frac{N-2s}{2}}  \  m^{\frac{-N+2s}{2}}   \Gamma \Big(\frac{N-2s-1}{2}\Big).
$$
We use AM-GM to get
\begin{equation}\label{E2:bound}
|E_2| \le C_{N,s} \Big(\sqrt{\pi} 2^{\frac{N-2s}{2}}    \Gamma \Big(\frac{N-2s-1}{2}\Big) \Big)^2 m^{4s}  \int_{\mathbb{R}^N}  e^{\lambda \cdot x} u^2\,dx     + \frac{1}{2}
\int_{\mathbb{R}^N}  e^{\lambda \cdot x} F^2 \,dx.
\end{equation}
Thus using \eqref{Iorigen:bound}, \eqref{E1:bound} and \eqref{E2:bound} we conclude that
\begin{align*}
\Big|-2 \int H^s_m (\omega, u)  F\,dx\Big| & \le -m^{2s} \int_{\RN}e^{At+\lambda \cdot x} H^{s}_m(u,u) \, dx  +  8 C_1(N,s)\int_{\mathbb{R}^N} e^{At+\lambda\cdot x} F^2\, dx \\
&\quad + C_2(N,s) m^{4s}  \, \int_{\mathbb{R}^N} e^{At+\lambda\cdot x} u^2\,dx  +  \int_{\mathbb{R}^N}  e^{At+\lambda \cdot x} F^2\, dx  \\
&\quad+  C_3(N,s) m^{4s} \int_{\mathbb{R}^N} e^{At+\lambda\cdot x} u^2\,dx  +\int_{\RN} e^{At+\lambda\cdot x}   F^2 \,dx\\
&\quad+4m^{4s} \int_{\mathbb{R}^N} e^{At+\lambda\cdot x} u^2\,dx  +\int_{\RN} e^{At+\lambda\cdot x}   F^2 \,dx,
\end{align*}
where $ C_1(N,s)$ is as above, and $ C_2(N,s)$ and $ C_3(N,s)$ are positive and depend only on $N$ and $s$.
We rename the constants (i.e., below $ C_1(N,s)$ and $ C_2(N,s)$ mean different constants, but still positive and depending only on $N$ and $s$) and we have
\begin{multline}\label{H:omega:v}
\Big|-2 \int H^s_m (\omega, u)  F\,dx\Big|\\ \le  - m^{2s} \int_{\RN}e^{At+\lambda \cdot x} H^{s}_m(u,u)\, dx  + C_1(N,s) \int_{\mathbb{R}^N} e^{At+\lambda \cdot x} F^2 \,dx
 +C_2(N,s) m^{4s}    \int_{\mathbb{R}^N}  e^{At+\lambda \cdot x}u^2\,dx.
\end{multline}

Summing up, from \eqref{Ddot}, \eqref{Ddot:H}, \eqref{Ddot:Hs} and \eqref{H:omega:v} we have
\begin{align*}
\dot{D}(t) &\ge
\frac34\big((-|\lambda|^2+m^2)^s-A\big)^2\mathcal{H}(t)  -C_1(N,s)\int_{\RN} \omega F^2  \,dx   + 2  \int_{\RN} \omega ( u_t)^2  \,dx \\
& \quad + (A+m^{2s}) \int_{\RN} H^s_m(u,u) \omega \, dx-\int_{\RN}\omega H^{2s}_m(u,u) \,dx   -C_2(N,s) m^{4s}    \int_{\mathbb{R}^N}  \omega u^2 \,dx.
\end{align*}
Thus if we take $A+m^{2s}<0$ sufficiently small (that is $|A|$ sufficiently large) such that $A$ satisfies \eqref{condA:2}, we get the conclusion.

Notice that since we want our energy terms to be positive, the presence of $H_m^{2s}$ imposes the restriction $s\in (0,1/2]$.
\end{proof}

\subsection{Carleman inequality}
\label{subsec:Carelx}

Once we have Proposition \ref{prop:lower} at our disposal, we will be able to prove Theorem~\ref{th11} in the Introduction. We state the result here again, with a slightly different reformulation.
\begin{theorem}
\label{carlemanCompleta}
Let $N\ge 1$, $s\in (0,1/2)$, $m>0$, and let $u_0 \in   \mathbb{H}^{2s}(\RN) \cap   L^2(e^{\lambda\cdot x}\, dx)$ for some $\lambda \in \RN$ with $|\lambda|\le m$. Assume that $F(t,x) \in L^2(0,T:L^2(e^{\lambda\cdot x}\, dx))$. Let $u$ be a  strong   solution to the initial value problem~\eqref{Eq:withF} and let  $\omega(t,x)=e^{At}e^{\lambda\cdot x}$.
Then, the following inequality holds
\begin{align} \label{Carleman1g}
\notag& \frac{1}{2}\int_0^1\mathcal{H}(t)\,dt + \frac{1}{2} \big((-|\lambda|^2+m^2)^s-A\big)^2 \int_0^1 t(1-t) \mathcal{H}(t) dt\\
 &\quad+ \frac{1}{2}\int_0^1 t(1-t)   \Big\{2  \int_{\RN} \omega ( u_t)^2dx  - \int \omega H^{2s}_m(u,u) dx  + (A+m^{2s})\int_{\RN} H^s_m(u,u) \omega \, dx\Big\} dt  \\
\notag&\le \frac{1}{2}\mathcal{H}(0)+\frac{1}{2}\mathcal{H}(1)
+  C_1(N,s)  \int_0^1 \int_{\RN} \omega\big( (\partial_t+L^s_m)(u) \big)^2\,dx\,dt,
\end{align}
for $A+m^{2s}<0$ sufficiently small (that is, $|A|$ sufficiently large) satisfying
$$
\big((-|\lambda|^2+m^2)^s-A\big)^2 \ge C_2(N,s) m^{4s},
$$
where $C_1(N,s),C_2(N,s)$ are positive constants depending only on $N$ and $s$.
\end{theorem}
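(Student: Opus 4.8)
The plan is to convert the differential convexity estimate \eqref{Ddot:lowerbound} of Proposition \ref{prop:lower} into the integrated inequality \eqref{Carleman1g} by testing $\dot D(t)$ against the Agmon cut-off $t(1-t)$, which vanishes at both endpoints $t=0,1$, and integrating by parts twice in the time variable.

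First I would record how $\mathcal{H}$, $D$ and the source are linked: from the equation $u_t=F-L^s_m u$ and the definition \eqref{eq:D},
\[
\dot{\mathcal{H}}(t)=\int_{\RN}\omega_t u^2\,dx+2\int_{\RN}\omega u u_t\,dx=D(t)+2\int_{\RN}\omega uF\,dx .
\]
Since $g(t):=t(1-t)$ satisfies $g(0)=g(1)=0$ and $g'(t)=1-2t$, integrating by parts once gives $\int_0^1 t(1-t)\dot D(t)\,dt=-\int_0^1(1-2t)D(t)\,dt$, and a second integration by parts (on the contribution of $\dot{\mathcal{H}}$, with boundary term $-\big[(1-2t)\mathcal{H}(t)\big]_0^1=\mathcal{H}(0)+\mathcal{H}(1)$) yields the identity
\[
\int_0^1 t(1-t)\dot D(t)\,dt=\mathcal{H}(0)+\mathcal{H}(1)-2\int_0^1\mathcal{H}(t)\,dt+2\int_0^1(1-2t)\Big(\int_{\RN}\omega uF\,dx\Big)dt .
\]

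Next, multiplying \eqref{Ddot:lowerbound} by $t(1-t)\ge0$, integrating over $[0,1]$ and using $0\le t(1-t)\le\tfrac14$ on the term carrying $\int\omega F^2$, one obtains a lower bound
\[
\int_0^1 t(1-t)\dot D\,dt\ \ge\ c_0\big((-|\lambda|^2+m^2)^s-A\big)^2\!\int_0^1\! t(1-t)\mathcal{H}\,dt\ +\ \int_0^1\! t(1-t)\,\mathcal{E}(t)\,dt\ -\ C\!\int_0^1\!\!\int_{\RN}\!\omega F^2 ,
\]
with $c_0=\tfrac34$ (indeed any fixed $c_0<1$ is available, at the cost of enlarging $C$) and $\mathcal{E}(t):=2\int_{\RN}\omega u_t^2\,dx-\int_{\RN}\omega H^{2s}_m(u,u)\,dx+(A+m^{2s})\int_{\RN}H^s_m(u,u)\omega\,dx\ge0$, the sign following from $H^s_m(u,u),H^{2s}_m(u,u)\le0$ (Proposition \ref{prop:leibniz}) and $A+m^{2s}<0$. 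Equating this lower bound with the identity above and moving the $-2\int_0^1\mathcal{H}$ to the convexity side, the only quantity left to control is the cross term $2\int_0^1(1-2t)\int_{\RN}\omega uF\,dx\,dt$, which I would estimate by Cauchy--Schwarz in $x$, the bound $|1-2t|\le1$, and Young's inequality, thereby splitting it into a fraction of $\int_0^1\mathcal{H}\,dt$ (absorbed by the $2\int_0^1\mathcal{H}\,dt$ just gained, leaving a strictly positive multiple of $\int_0^1\mathcal{H}(t)\,dt$) and a multiple of $\int_0^1\int_{\RN}\omega F^2$ (sent to the right-hand side). Recalling $F=(\partial_t+L^s_m)u$ and lowering the positive coefficients of $\big((-|\lambda|^2+m^2)^s-A\big)^2\int_0^1 t(1-t)\mathcal{H}$ and of $\int_0^1 t(1-t)\mathcal{E}$ down to $\tfrac12$, one arrives at \eqref{Carleman1g} after renaming $C_1(N,s)$.

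The step I expect to require the most care is the bookkeeping of the numerical constants in the time-integration part — in particular, making sure that the negative term $-2\int_0^1\mathcal{H}(t)\,dt$ produced by the double integration by parts is dominated, even after Young's inequality has been used on the cross term, so that both $\int_0^1\mathcal{H}(t)\,dt$ and (by the largeness of $|A|$, i.e. of $\big((-|\lambda|^2+m^2)^s-A\big)^2$) the weighted term $\big((-|\lambda|^2+m^2)^s-A\big)^2\int_0^1 t(1-t)\mathcal{H}(t)\,dt$ survive on the left-hand side with positive coefficients. All the genuinely analytic work — the spectral identities \eqref{eq:wt}--\eqref{eq:Lmsw} for $\omega$ and, above all, the delicate estimate of $-2\int H^s_m(\omega,u)F$ — is already contained in Proposition \ref{prop:lower}, so no new obstacle arises here, and no restriction on $A$ beyond $A+m^{2s}<0$ and \eqref{condA:2} is needed.
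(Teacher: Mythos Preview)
Your proposal is correct, and the key identity you derive,
\[
\int_0^1 t(1-t)\,\dot D(t)\,dt=\mathcal{H}(0)+\mathcal{H}(1)-2\int_0^1\mathcal{H}(t)\,dt+2\int_0^1(1-2t)\Big(\int_{\RN}\omega uF\,dx\Big)dt,
\]
is exactly the one the paper uses (their displayed formula just before \eqref{eq:pre}, divided by $2$). What differs is how the identity is obtained. The paper introduces, for each $t\in(0,1)$, the tent function $\eta(\tau)=\tau/t$ on $[0,t]$ and $(1-\tau)/(1-t)$ on $[t,1]$, writes the pointwise interpolation $\mathcal{H}(t)=(1-t)\mathcal{H}(0)+t\mathcal{H}(1)+t(1-t)\int_0^1\dot{\mathcal{H}}\,\dot\eta\,d\tau$, replaces $\dot{\mathcal{H}}$ by $D+2\int\omega uF$, integrates once by parts in $\tau$, and then averages the resulting formula in $t$ over $[0,1]$ using $\int_0^1 t(1-t)\eta(\tau)\,dt=\tfrac12\tau(1-\tau)$ and $\int_0^1 t(1-t)\dot\eta(\tau)\,dt=\tfrac12(1-2\tau)$. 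You bypass the tent-function device entirely and go straight to the same identity by two plain integrations by parts of $\int_0^1 t(1-t)\dot D$ against $g(t)=t(1-t)$; this is shorter and more transparent. The tent-function route, on the other hand, first produces a \emph{pointwise} interpolation inequality for $\mathcal{H}(t)$ before averaging, which is the natural object if one wants three-line or log-convexity type statements rather than the integrated Carleman estimate. After the identity, both proofs proceed identically: insert \eqref{Ddot:lowerbound}, and use AM--GM on the cross term (the paper takes exactly $(1-2t)\,\omega uF\le\tfrac12(1-2t)^2\omega F^2+\tfrac12\omega u^2$, which absorbs $\int_0^1\mathcal{H}$ cleanly and leaves the coefficient $\tfrac12$ stated in \eqref{Carleman1g}).
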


\begin{proof}
We will consider the following tent function
$$
\eta(\tau)=
\begin{cases}
\frac{\tau}{t}, \quad &0\le \tau\le t,\\
\frac{1-\tau}{1-t}, \quad &t\le \tau\le 1.
\end{cases}
$$
Then $\dot \eta$ is a decreasing step function
$$
\dot{\eta}(\tau) + \frac{1}{1-t}=
\begin{cases}
\frac{1}{t(1-t)}, \quad &0\le \tau\le t,\\
0, \quad &t\le \tau\le 1.
\end{cases}
$$
Thus by denoting $\delta$ the distributional derivative of the Heaviside function with values $0$ and $1$  we obtain that
$\ddot{\eta} = - \frac{1}{t(1-t)}\delta_t$  in the distributional sense.
Let $\overline{\mathcal{H}}(t)=-(1-t)\mathcal{H}(0)-t\mathcal{H}(1)+\mathcal{H}(t)$, so that $\overline{\mathcal{H}}(0)=0$ and $\overline{\mathcal{H}}(1)=0$. Then, integrating by
parts we obtain
$$
\int_0^1\dot{\overline{\mathcal{H}}}(\tau)\dot{\eta}(\tau)\,d\tau=-\int_0^1\ddot{\eta}(\tau)\overline{\mathcal{H}}(\tau)\,d\tau
$$
and thus we infer that
$$
\mathcal{H}(t)=(1-t)\mathcal{H}(0)+t\mathcal{H}(1)+t(1-t)\int_0^1\dot{\mathcal{H}}(\tau)\dot{\eta}(\tau)\,d\tau.
$$
Then, taking into account \eqref{eq:Hpunt}, we have
\begin{equation*}
\mathcal{H}(t)=(1-t)\mathcal{H}(0)+t\mathcal{H}(1)
+t(1-t)\int_0^1\dot{\eta}(\tau)D(\tau)\,d\tau+2t(1-t)\int_0^1\dot{\eta}(\tau)\int_{\RN} \omega uF\,dx\,d\tau.
\end{equation*}
Integrating by parts,
\begin{equation*}
\mathcal{H}(t)=(1-t)\mathcal{H}(0)+t\mathcal{H}(1)
-t(1-t)\int_0^1\eta(\tau)\dot{D}(\tau)\,d\tau+2t(1-t)\int_0^1\dot{\eta}(\tau)\int_{\RN} \omega uF\,dx\,d\tau.
\end{equation*}
We integrate in $t$ between $0$ and $1$. Notice that
$$
\int_0 ^1 t(1-t) \eta(\tau) dt= \frac{1}{2} \tau (1-\tau)\quad \text{and} \quad
\int_0 ^1 t(1-t)\dot{\eta}(\tau) dt= \frac{1-2\tau}{2}.
$$
Then
\begin{equation*}
\int_0^1\mathcal{H}(t)dt=\frac{1}{2}\mathcal{H}(0)+\frac{1}{2}\mathcal{H}(1)
-\frac{1}{2}\int_0^1\tau (1-\tau)\dot{D}(\tau)\,d\tau+\int_0^1 (1-2\tau)\int_{\RN} \omega uF\,dx\,d\tau.
\end{equation*}
By renaming the integrals in $\tau$, this is equivalent to
\begin{equation*}
\int_0^1\mathcal{H}(t)dt+\frac{1}{2}\int_0^1t(1-t)\dot{D}(t)\,dt=\frac{1}{2}\mathcal{H}(0)+\frac{1}{2}\mathcal{H}(1)
+\int_0^1 (1-2t)\int_{\RN} \omega uF\,dx\,dt.
\end{equation*}
Now we use the estimate \eqref{Ddot:lowerbound} of Proposition \ref{prop:lower} (under the assumptions on $A$), so that
\begin{align*}
&\int_0^1\mathcal{H}(t)dt+\frac{1}{2}\int_0^1t(1-t)   \frac{3}{4} \big(A-(-|\lambda|^2+m^2)^s\big)^2\mathcal{H}(t) dt -C_1(N,s)\int_{\RN} t(1-t) \int_{\RN} \omega F^2\,dx \,dt\\
&\quad+ \frac{1}{2}\int_{\RN} t(1-t)   \Big\{2  \int_{\RN} \omega ( u_t)^2  -  \int \omega H^{2s}_m(u,u)   + (A+m^{2s})\int_{\RN} H^s_m(u,u) \omega \, dx\Big\}\,dt \\
&\le \frac{1}{2}\mathcal{H}(0)+\frac{1}{2}\mathcal{H}(1)
+\int_0^1 (1-2t)\int_{\RN} \omega uF\,dx\,dt,
\end{align*}
or, equivalently,
\begin{align*}
&\int_0^1\mathcal{H}(t)dt+\frac{1}{2}\int_0^1t(1-t)   \frac{3}{4} \big((-|\lambda|^2+m^2)^s-A\big)^2\mathcal{H}(t)\, dt \\
&\quad+ \frac{1}{2}\int_0^1 t(1-t)   \Big\{2  \int_{\RN} \omega ( u_t)^2  - \int_{\RN} \omega H^{2s}_m(u,u)   + (A+m^{2s})\int_{\RN} H^s_m(u,u) \omega \, dx\Big\} \\
\notag&\le \frac{1}{2}\mathcal{H}(0)+\frac{1}{2}\mathcal{H}(1)+C_1(N,s)\int_0^1 t(1-t) \int_{\RN} \omega F^2dx dt
+\int_0^1 (1-2t)\int_{\RN} \omega uF\,dx\,dt.
\end{align*}
Applying the AM-GM inequality  $(1-2t) \omega u F \le \frac{1}{2}(1-2t)^2  \omega F^2  + \omega u^2$ yields
\begin{align}\label{eq:pre}
\notag&\frac{1}{2}\int_0^1\mathcal{H}(t)\,dt+ \frac{3}{8} \big((-|\lambda|^2+m^2)^s-A\big)^2\int_0^1t(1-t) \mathcal{H}(t) \,dt \\
\notag&\qquad+ \frac{1}{2}\int_0^1 t(1-t)   \Big\{2  \int_{\RN} \omega ( u_t)^2  -\int_{\RN} \omega H^{2s}_m(u,u)   +  (A+m^{2s})\int_{\RN} H^s_m(u,u) \omega \, dx\Big\} \,dt  \\
&\quad \le \frac{1}{2}\mathcal{H}(0)+\frac{1}{2}\mathcal{H}(1)
+    \int_0^1  \Big(\frac{1}{2} (1-2t)^2  +C_1(N,s) t(1-t) \Big) \int_{\RN}\omega F^2\,dx\,dt.
\end{align}
Finally, by considering the maximum of the  weight functions in $t$ we obtain the conclusion.
\end{proof}
\begin{cor}Due to the monotonicity of $\mathcal{H}(t)$ as a function of $t$ (in particular, by \eqref{persistence:t:1}), we have that
$$\mathcal{H}(1) \le \int_0^1\mathcal{H}(t)dt + e^{A - ( -|\lambda|^2 +m^ 2 )^ s} \int_{0} ^1  \int_{\RN} \omega  F^2 \,dx\,dt.$$
 So the term $\frac{1}{2}\mathcal{H}(1)$ can be hidden in the left hand side into the term $\frac{1}{2}\int \mathcal{H}(t)\,dt$. Hence, the resulting terms turn out to be still
 positive, and we derive, from the Carleman inequality \eqref{Carleman1g}, that
\begin{align*}
& \frac{1}{2} \big((-|\lambda|^2+m^2)^s-A\big)^2 \int_0^1\mathcal{H}(t)t(1-t) dt  +   \text{positive energy terms}  \\
&\le \frac{1}{2}\mathcal{H}(0)
+  \big(C_1(N,s)+e^{A - ( -|\lambda|^2 +m^ 2 )^ s}\big)  \int_0^1 \int_{\RN} \omega\big( (\partial_t+L_m)(u) \big)^2\,dx\,dt,
\end{align*}
where $A$ satisfies  \eqref{condA:2}.
\end{cor}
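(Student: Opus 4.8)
The plan is to combine the monotonicity estimate \eqref{persistence:t:1} with the Carleman inequality \eqref{Carleman1g}, keeping careful track of signs. Write $a:=A-(-|\lambda|^2+m^2)^s$, as in the proof of the persistence proposition. The hypothesis $A+m^{2s}<0$ forces $A<-m^{2s}<0$, and $(-|\lambda|^2+m^2)^s>0$ since $|\lambda|<m$; hence $a<0$ and $e^{a}\in(0,1)$. In \eqref{persistence:t:1} the term $\int_t^1\int_{\RN}\omega(-H^s_m(u,u))\,dx\,dt$ is nonnegative by Proposition~\ref{prop:leibniz}, and $\int_t^1\int_{\RN}\omega F^2\,dx\,dt\le\int_0^1\int_{\RN}\omega F^2\,dx\,dt$ because the integrand is nonnegative. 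Using in addition that $e^{a}<1$ and $\mathcal{H}(t)=\int_{\RN}\omega u^2\,dx\ge0$, so that $e^{a}\mathcal{H}(t)\le\mathcal{H}(t)$, \eqref{persistence:t:1} yields, for every $t\in(0,1)$,
\[
\mathcal{H}(1)\le\mathcal{H}(t)+e^{a}\int_0^1\int_{\RN}\omega F^2\,dx\,dt .
\]
Integrating this inequality in $t$ over $[0,1]$, noting that the left-hand side and the last term do not depend on $t$, gives exactly the first display of the statement.

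Next I would substitute this bound into \eqref{Carleman1g}. On the right-hand side of \eqref{Carleman1g} replace $\tfrac12\mathcal{H}(1)$ by $\tfrac12\int_0^1\mathcal{H}(t)\,dt+\tfrac12 e^{a}\int_0^1\int_{\RN}\omega F^2\,dx\,dt$; the new summand $\tfrac12\int_0^1\mathcal{H}(t)\,dt$ then cancels the term $\tfrac12\int_0^1\mathcal{H}(t)\,dt$ already present on the left-hand side of \eqref{Carleman1g}. Since for a solution of \eqref{Eq:withF} one has $F=(\partial_t+L^s_m)u$, the identity $\int_0^1\int_{\RN}\omega F^2\,dx\,dt=\int_0^1\int_{\RN}\omega\big((\partial_t+L^s_m)u\big)^2\,dx\,dt$ lets one merge the two error integrals, and the crude bound $\tfrac12 e^{a}\le e^{a}$ produces the constant $C_1(N,s)+e^{a}$ claimed in the statement. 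What remains on the left is precisely $\tfrac12\big((-|\lambda|^2+m^2)^s-A\big)^2\int_0^1 t(1-t)\mathcal{H}(t)\,dt$ together with the bracketed integral $\tfrac12\int_0^1 t(1-t)\{\,\cdots\,\}\,dt$.

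Finally I would verify that this bracketed integral is nonnegative, which is what permits labelling it ``positive energy terms''. For the three summands inside the brace: $2\int_{\RN}\omega u_t^2\,dx\ge0$ trivially; $-\int_{\RN}\omega H^{2s}_m(u,u)\,dx\ge0$ because $H^{2s}_m(u,u)\le0$ by Proposition~\ref{prop:leibniz} --- this is where $0<2s<1$, i.e. $s\in(0,1/2]$, and the standing assumption $u^2\in\operatorname{Dom}(L_m^{2s})$ are used; and $(A+m^{2s})\int_{\RN}H^s_m(u,u)\,\omega\,dx\ge0$ since $A+m^{2s}<0$ and $H^s_m(u,u)\le0$, again by Proposition~\ref{prop:leibniz}. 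This completes the argument. There is no substantial obstacle here: the only thing requiring a little care is the sign bookkeeping --- in particular checking $a<0$ so that $e^{a}<1$, and that the three energy terms carry the correct sign under the hypothesis $A+m^{2s}<0$; everything else is straightforward substitution into \eqref{Carleman1g}.
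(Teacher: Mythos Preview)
Your proof is correct and follows exactly the approach sketched in the corollary's own statement (the paper gives no separate proof beyond that sketch). You have in fact supplied the details the paper omits --- the verification that $a<0$ so $e^a\mathcal{H}(t)\le\mathcal{H}(t)$, the extension of the $F^2$ integral from $[t,1]$ to $[0,1]$, and the sign check for each of the three energy terms --- all of which are handled correctly.
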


\begin{remark}
Observe that, from \eqref{eq:pre}, we could also immediately deduce the following convexity inequality:
\begin{equation} \label{Carleman1Jems}
\|\sqrt{t(1-t)}\omega^{1/2}u\|_{L^2(\R^N\times[0,1])}\lesssim \sup_{t\in [0,1]}\|\omega^{1/2}F\|_{L^2(\R^N)}+
\mathcal{H}(0)+\mathcal{H}(1).
\end{equation}
The Carleman inequality in \eqref{Carleman1Jems} reminds the one contained in \cite[Lemma 4]{EKPV-JEMS}. Such an inequality is used therein to obtain a convexity inequality (see
\cite[Theorem 3]{EKPV-JEMS}).
\end{remark}


\appendix

\section{}
\label{equiv}

Apart from the definitions for $L_m^s$ given in Section \ref{Sect:Defn}, we introduce the definition using the subordination formula. Motivated by the formula \eqref{formula:spower},
we define the operator $ L_m^s (f)$ as follows. Let $0<s<1$,  $m\ge0$ and $f\in \mathcal{S}$. The operator $ L_m^s (f)$ is obtained as a weighted integral of the associated heat
semigroup, by means of the spectral theorem
\begin{equation}
\label{Defn:subordination}
L_m^{s}f(x) =\frac{1}{\Gamma(-s)} \int_0^\infty \big( e^{t (\Delta -m^2)} f(x) -f(x) \big) \frac{dt}{t^{1+s}}.
\end{equation}
We notice that the fractional power could be also defined using functional calculus as in Kato \cite[p. 286]{K}, Pazy \cite[p. 69]{P} or Yosida  \cite[p. 260]{Y}.

The following lemma is the analogous to \cite[Lemma 2.1]{Stinga} for the fractional relativistic operator.

\begin{lemma}
For $f\in \mathcal{S}(\RN)$ and $0<s<1$, the definitions given in \eqref{Defn:Fourier}, \eqref{Defn:subordination} and \eqref{eq:pointwise} are equivalent.
\end{lemma}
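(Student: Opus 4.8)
The goal is to show the equivalence of the three definitions of $L_m^s$ on the Schwartz class: the Fourier multiplier definition \eqref{Defn:Fourier}, the subordination (heat semigroup) definition \eqref{Defn:subordination}, and the pointwise singular integral representation \eqref{eq:pointwise}. I would organize the proof as a cycle, establishing \eqref{Defn:subordination} $\Longrightarrow$ \eqref{Defn:Fourier} $\Longrightarrow$ \eqref{eq:pointwise}, since each implication is essentially a Fubini-type computation together with one of the integral identities already recorded in Section \ref{Sect:Defn}.

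\emph{Step 1: \eqref{Defn:subordination} equals \eqref{Defn:Fourier}.} Take the Fourier transform of the right-hand side of \eqref{Defn:subordination}. Since $f\in\mathcal{S}$, the heat semigroup $e^{t(\Delta-m^2)}$ acts as the Fourier multiplier $e^{-t(|\xi|^2+m^2)}$, and the decay in $t$ (near $0$ the integrand is $O(t^{1-s})$ because $e^{-t(|\xi|^2+m^2)}-1 = O(t)$, near $\infty$ it is $O(t^{-1-s})$) makes the $t$-integral absolutely convergent after applying $\mathcal{F}$; hence Fubini is justified. One is left with
$$
\frac{1}{\Gamma(-s)}\int_0^\infty\big(e^{-t(|\xi|^2+m^2)}-1\big)\frac{dt}{t^{1+s}}\,\widehat f(\xi) = (|\xi|^2+m^2)^s\,\widehat f(\xi),
$$
where the last equality is exactly identity \eqref{formula:spower} with $\gamma=|\xi|^2+m^2>0$. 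This gives \eqref{Defn:Fourier}.

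\emph{Step 2: \eqref{Defn:Fourier} equals \eqref{eq:pointwise}.} Starting again from \eqref{Defn:subordination}, I would instead pass to the spatial side. The heat kernel for $e^{t(\Delta-m^2)}$ is $e^{-m^2 t}(4\pi t)^{-N/2}e^{-|x|^2/(4t)}$, so
$$
L_m^s f(x)=\frac{1}{\Gamma(-s)}\int_0^\infty\Big(\int_{\RN} e^{-m^2t}\frac{e^{-|x-y|^2/(4t)}}{(4\pi t)^{N/2}}\big(f(y)-f(x)\big)\,dy\Big)\frac{dt}{t^{1+s}} + \frac{f(x)}{\Gamma(-s)}\int_0^\infty(e^{-m^2t}-1)\frac{dt}{t^{1+s}},
$$
where the last term is $m^{2s}f(x)$ by \eqref{formula:spower}. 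For the first term, interchanging the order of integration (the difference $f(y)-f(x)$ together with $f\in C_b^2$ controls the singularity at $t\to 0$, exactly as in the justification that \eqref{eq:pointwise} converges in the principal value sense) and carrying out the $t$-integral via the Sommerfeld representation \eqref{formula:Bessel} of the Macdonald function produces the kernel $C_{N,s}m^{(N+2s)/2}|x-y|^{-(N+2s)/2}K_{(N+2s)/2}(m|x-y|)$, with $C_{N,s}$ precisely as in \eqref{constant}; this is the computation already performed in the proof of Lemma \ref{lemma:Bessel}, only with $f(y)-f(x)$ in place of $1-e^{\lambda\cdot z}$. Matching constants yields \eqref{eq:pointwise}. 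Finally, \eqref{eq:pointwise} determines $L_m^s f$ pointwise and hence, being a Schwartz function's image, agrees with the multiplier definition; alternatively one closes the cycle by taking Fourier transforms back.

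\emph{Main obstacle.} The routine parts are the two Fubini interchanges; the one genuinely delicate point is justifying the exchange of $\int_0^\infty dt/t^{1+s}$ with $\int_{\RN}dy$ near $t=0$, where neither factor alone is integrable. Here one must use the cancellation $f(y)-f(x)$: splitting the $y$-integral into $|x-y|\le 1$ and $|x-y|>1$, a second-order Taylor expansion of $f$ gives $|f(y)-f(x)| \lesssim \|D^2 f\|_\infty |x-y|^2$ on the odd part after symmetrization, which against the Gaussian $e^{-|x-y|^2/(4t)}(4\pi t)^{-N/2}$ contributes an $O(t)$ factor, restoring integrability of $t^{-s}$ near $0$; the tail $|x-y|>1$ is harmless because of the Gaussian decay and $e^{-m^2 t}$. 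This is exactly the same argument that underlies the well-posedness of the principal value in \eqref{eq:pointwise}, so I would state it once and reuse it. The remaining bookkeeping — tracking the powers of $2$, $\pi$, and $\Gamma$ through \eqref{formula:Bessel} and \eqref{eq:Knu} to recover \eqref{constant} — is mechanical and can be relegated to the symmetry remark already invoked before Proposition \ref{prop:leibniz}.
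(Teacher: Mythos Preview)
Your proposal is correct and follows essentially the same route as the paper: both link \eqref{Defn:subordination} to \eqref{Defn:Fourier} via Fubini and the identity \eqref{formula:spower}, and then link \eqref{Defn:subordination} to \eqref{eq:pointwise} by expanding the heat kernel, splitting off the $m^{2s}f(x)$ term, and collapsing the $t$-integral with the Sommerfeld representation \eqref{formula:Bessel}. The one cosmetic slip is that you advertise a cycle \eqref{Defn:subordination} $\Rightarrow$ \eqref{Defn:Fourier} $\Rightarrow$ \eqref{eq:pointwise} but in Step~2 actually start again from \eqref{Defn:subordination}; this is harmless since two pairwise equivalences with a common node suffice, and it is exactly what the paper does.
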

\begin{proof}
We will first prove that \eqref{Defn:Fourier} and \eqref{Defn:subordination} are equivalent.
Observe that, by the inverse Fourier transform,
$$
 e^{t (\Delta -m^2)} f(x) -f(x) =\frac{1}{(2\pi)^{N/2}}\int_{\RN}\big(e^{-t(|\xi|^2+m^2)}-1\big)\widehat{f}(\xi)e^{ix\cdot \xi}\,d\xi.
$$
With this and the change of variables $w=t(|\xi|^2+m^2)$ we obtain
\begin{align*}
\int_0^{\infty}\big| e^{t (\Delta -m^2)} f(x) -f(x)\big|\frac{dt}{t^{1+s}}&\le
C_N\int_0^{\infty}\int_{\RN}\big|e^{-t(|\xi|^2+m^2)}-1\big||\widehat{f}(\xi)|\,d\xi\frac{dt}{t^{1+s}}\\
&=C_N\int_{\RN}\int_0^{\infty}\big|e^{-w}-1\big|\frac{dw}{w^{1+s}}(|\xi|^2+m^2)^s|\widehat{f}(\xi)|\,d\xi\\
&=C_{s,N}\int_{\RN}(|\xi|^2+m^2)^s|\widehat{f}(\xi)|\,d\xi<\infty,
\end{align*}
since we are considering $f\in \mathcal{S}(\RN)$. Therefore, by Fubini's Theorem,
\begin{align*}
\frac{1}{\Gamma(-s)}\int_0^{\infty}\big( e^{t (\Delta -m^2)} f(x)
-f(x)\big)\frac{dt}{t^{1+s}}&\frac{1}{\Gamma(-s)}\frac{1}{(2\pi)^{N/2}}\int_{\RN}\int_0^{\infty}\big(e^{-t(|\xi|^2+m^2)}-1\big)\frac{dt}{t^{1+s}}\widehat{f}(\xi)e^{ix\cdot
\xi}\,d\xi\\
&=\frac{1}{\Gamma(-s)}\frac{1}{(2\pi)^{N/2}}\int_{\RN}\int_0^{\infty}\big(e^{-w}-1\big)\frac{dw}{w^{1+s}}(|\xi|^2+m^2)^s\widehat{f}(\xi)e^{ix\cdot \xi}\,d\xi\\
&=\frac{1}{(2\pi)^{N/2}}\int_{\RN}(|\xi|^2+m^2)^s|\widehat{f}(\xi)|\,d\xi=\mathcal{F}^{-1}\big((|\cdot|^2+m^2)\widehat{f}(\cdot)\big)(x).
\end{align*}

We will check the equivalence between \eqref{Defn:subordination} and \eqref{eq:pointwise}. Let us denote $W_{t,m}(x):=e^{-tm^2}\frac{e^{-\frac{|x-y|^2}{4t}}}{(4\pi t)^{N/2}}$. By
Fubini's Theorem,
\begin{align*}
\int_0^{\infty}\big( e^{t (\Delta -m^2)} f(x) -f(x)\big)\frac{dt}{t^{1+s}}&=\int_0^\infty \int_{\R^N}W_{t,m}(x-y)\big(f(y) -f(x)\big)\,dy \frac{dt}{t^{1+s}} \\
&\qquad +f(x)\int_0^\infty \Big( \int_{\R^N}  W_{t,m}(x-y)\,dy\Big)(1-e^{tm^2})\frac{dt}{t^{1+s}}.
\end{align*}
The integral in the second summand boils down to
$$
\int_0^\infty \Big( \int_{\R^N}  \frac{e^{-\frac{|x-y|^2}{4t}}}{(4\pi t)^{N/2}}\,dy\Big)(e^{-tm^2}-1)\frac{dt}{t^{1+s}}=\Gamma(-s)m^{2s},
$$
On the other hand, the integral in the first summand reads as
\begin{equation*}
\int_{\R^N} \big(f(y) -f(x)\big)\int_0^\infty e^{-tm^2}\frac{e^{-\frac{|x-y|^2}{4t}}}{(4\pi t)^{N/2}} \frac{dt}{t^{1+s}}\,dy
=\Gamma(-s)C_{N,s} m^{\frac{N+2s}{2}} \int_{\mathbb{R}^N} \frac{f(x)-f(y)}{|x-y|^{\frac{N+2s}{2}}} K_{\frac{N+2s}{2}}(m|x-y|)\,dy
\end{equation*}
where we used the integral representation \eqref{formula:Bessel} of the Macdonald's function $K_{\nu}$, after a change of variable. The applications of Fubini's theorem can be
justified following an analogous argument as in \cite[Lemma 2.1]{Stinga}, by using the asymptotics \eqref{eq:asymp0} and \eqref{eq:asympInf}.
\end{proof}

\section*{Acknowledgements}

The authors would like to express their gratitude to Aingeru Fern\'andez--Bertolin for a careful reading of the manuscript and to Pawe\l{}  Sztonyk for inspiring discussions on L\'evy processes.

The first and third authors were supported by the Basque Government through BERC 2018--2021 program and by Spanish Ministry of Science, Innovation and Universities through BCAM Severo Ochoa accreditation SEV-2017-2018. The second and third authors were supported by the ERCEA Advanced Grant 2014 669689 - HADE and by the Spanish research project PGC2018-094522 N-100 from the MICINNU. The first author is also supported by the project PID2020-113156GB-I00 / AEI / 10.13039/501100011033 and acronym ``HAPDE''. She also acknowledges the RyC project RYC2018-025477-I and Ikerbasque.


\end{document}